\newtheorem{theorem}{Theorem}[section]
\newtheorem{lemma}{Lemma}[section]
\newtheorem{proposition}{Proposition}[section]
\newtheorem{remark}{Remark}[section]
\numberwithin{equation}{section}
\newcommand{\tr}{^\mathsf{T}}
\newcommand{\vertiii}[1]{{\left\vert\kern-0.25ex\left\vert\kern-0.25ex\left\vert #1
    \right\vert\kern-0.25ex\right\vert\kern-0.25ex\right\vert}}
\newcommand{\dsp}{\displaystyle}
\newcommand{\ess}{\mathrm{ess} \,}
\newcommand{\cH}{{\mathcal H}}
\newcommand{\eps}{\varepsilon}
\newcommand{\mR}{\mathbb{R}}
\newcommand{\cV}{\mathcal V}
\newcommand{\cU}{\mathcal U}
\newcommand{\cW}{\mathcal W}
\newcommand{\hcV}{\hat{\mathcal V}}
\newcommand{\hcW}{\hat{\mathcal W}}
\newcommand{\tcV}{\widetilde{\mathcal V}}
\newcommand{\tcW}{\widetilde{\mathcal W}}
\newcommand{\cB}{\mathcal B}
\newcommand{\B}{\mathrm{B}}
\title[Lyapunov functions]{
Lyapunov functions and  finite time stabilization in optimal time for homogeneous linear and quasilinear  hyperbolic systems}
\author{Jean-Michel Coron}
\address[Jean-Michel Coron]{Sorbonne Universit\'{e}, Universit\'{e} de Paris, CNRS, INRIA,
\newline \indent 	Laboratoire Jacques-Louis Lions, \'{e}quipe Cage, Paris, France.}
\email{coron@ann.jussieu.fr.}
\author{Hoai-Minh Nguyen}
\address[Hoai-Minh Nguyen]{Ecole Polytechnique F\'ed\'erale de Lausanne, EPFL,
\newline \indent CAMA, Station 8,  CH-1015 Lausanne, Switzerland.}
\email{hoai-minh.nguyen@epfl.ch}
\begin{document}

\maketitle

\begin{abstract} Hyperbolic systems in one dimensional space are frequently used
in modeling of many physical systems. In our recent works, we introduced time independent feedbacks leading to the finite stabilization for the optimal time of homogeneous linear and quasilinear  hyperbolic systems. In this work, we present Lyapunov's functions for these feedbacks and use estimates for Lyapunov's functions to rediscover the finite stabilization results. 
\end{abstract}

\medskip 
\noindent{\bf Keywords}: hyperbolic systems, boundary controls, Lyapunov functions, backstepping, finite time stabilization,  optimal time.

\section{Introduction}

Hyperbolic systems in one dimensional space are frequently used
in modeling of many systems such as traffic flow \cite{SFA08}, heat exchangers \cite{XS02},  and fluids in open channels \cite{GL03, dHPCAB03, GLS04, DP08}, transmission lines \cite{CFM11}, phase transition \cite{Goatin06}.  In our recent works \cite{CoronNg19, CoronNg20}, we introduced time independent feedbacks leading to the finite stabilization for the optimal time of
homogeneous linear and quasilinear  hyperbolic systems.
In this work, we present Lyapunov's functions for these feedbacks and use estimates for Lyapunov's functions to rediscover the finite stabilization results.  More precisely,  we are concerned about the following homogeneous,  quasilinear,  hyperbolic system in one dimensional space
\begin{equation}\label{Sys}
\partial_t w (t, x) =  \Sigma \big(x, w(t, x) \big) \partial_x w (t, x)  \mbox{ for } (t, x)  \in [0, + \infty) \times (0, 1).
\end{equation}
Here $w = (w_1, \cdots, w_n)\tr: [0, + \infty) \times (0, 1) \to \mR^n$, $\Sigma(\cdot, \cdot)$ is an   $(n \times n)$ real matrix-valued function defined in $[0, 1] \times \mR^n$. We assume that  $\Sigma (\cdot, \cdot)$ has  $m \ge 1$  distinct positive eigenvalues,  and $k = n - m \ge 1$  distinct negative eigenvalues. We also assume that, maybe after a change of variables,   $\Sigma(x, y)$ for $x \in [0, 1]$ and   $y \in \mR^n$ is of the form
\begin{equation}\label{form-A}
\Sigma(x, y) = \mbox{diag} \Big(- \lambda_1(x, y), \cdots, - \lambda_{k}(x, y),  \lambda_{k+1}(x, y), \cdots,  \lambda_{k+m}(x, y) \Big),
\end{equation}
where
\begin{equation}\label{relation-lambda}
-\lambda_{1}(x, y) < \cdots <  - \lambda_{k} (x, y)< 0 < \lambda_{k+1}(x, y) < \cdots \lambda_{k+m}(x, y).
\end{equation}
Throughout the paper, we assume
\begin{equation}\label{cond-lambda}
\mbox{$\lambda_i$ and $\partial_y \lambda_i$  are of class $C^1$ with respect to $x$ and $y$    for $1 \le i \le n = k + m$.}
\end{equation}
Denote
$$
\mbox{$w_- = (w_1, \cdots, w_k)\tr $ and $w_+ = (w_{k+1}, \cdots, w_{k+m})\tr$.}
$$

The following types of boundary conditions and controls are considered.
The boundary condition at $x = 0$ is given by
\begin{equation}\label{bdry-w-0}
w_-(t, 0)  = \B \big(w_+(t,  0) \big) \mbox{ for } t \ge 0,
\end{equation}
for some
$$
\mbox{$\B \in \big( C^2(\mR^m) \big)^k$ \mbox{with} $\B(0) = 0$,}
$$
and the boundary control at $x = 1$ is
\begin{equation}\label{bdry-w-1}
w_+(t, 1) = (W_{k+1}, \cdots, W_{k+m})\tr (t) \mbox{ for } t \ge 0,
\end{equation}
where $W_{k +1}, \dots, W_{k + m}$ are controls.

Set
\begin{equation}\label{def-tau}
\tau_i =  \int_0^1  \frac{1}{\lambda_i(x, 0)} \, dx \quad  \mbox{ for } 1 \le i \le n.
\end{equation}

The exact controllability, the null-controllability, and the boundary stabilization of  hyperbolic systems in one dimension  have been widely investigated in the literature for almost half a century, see e.g. \cite{BC16} and the references therein.  Concerning the exact controllability and  the null-controllability related to \eqref{bdry-w-0} and \eqref{bdry-w-1}, the pioneer works date back to Jeffrey Rauch and Michael Taylor \cite{RT74} and David Russell \cite{Russell78} for the linear inhomogeneous system. In the  quasilinear case with $m \ge k$,  the null controllability was established for $m \ge k$ by Tatsien Li in  \cite[Theorem 3.2]{Li00} (see also \cite{LiRao02}). These results hold for the time  $\tau_k + \tau_{k+1}$.

Concerning the stabilisation of \eqref{Sys}, many works are concerned about the boundary conditions of the following specific form
\begin{equation}\label{bdry-S}
\left(\begin{array}{c}
    w_- (t, 0) \\
    w_+ (t, 1)
  \end{array}\right)
= G
 \left( \begin{array}{c}
    w_+(t, 1) \\
    w_- (t, 0)
  \end{array} \right),
\end{equation}
where $G: \mR^n \to \mR^n$ is a suitable smooth vector field. Three approaches have been proposed to deal with \eqref{bdry-S}.  The first one is based on the characteristic method.  This method was  investigated  in the framework of $C^1$-norm \cite{GL84, Li94}. The second one is based on Lyapunov functions \cite{BCN99, LS02, BCN07, CBN08,CBA08, DBC12, BC15}. 
The third one is via the delay equations and was investigated  in the framework of  $W^{2,p}$-norm with $p \ge 1$ \cite{CoronNg15}. 
Surprisingly,  the stability criterion in the nonlinear setting depends on the norm considered \cite{CoronNg15}. Required assumptions impose some restrictions on the magnitude of the coupling coefficients when dealing with inhomogeneous systems.

Another way to
stabilise \eqref{Sys} is to use  the backstepping approach. This was first proposed by Jean-Michel Coron et al.  \cite{CVKB13} for $2\times 2$ inhomogeneous  system $(m=k=1)$.  Later this approach has been extended and now can be applied for general pairs $(m,k)$ in  the linear case \cite{MVK13, HMVK16, AM16, CHO17, CoronNg19, CoronNg19-2}.  In \cite{CVKB13}, the authors obtained feedbacks leading to the finite stabilization in time $\tau_1+\tau_2$  with $m = k = 1$.   In \cite{HMVK16}, the authors considered the case where $\Sigma$ is constant and   obtained feedback laws for the null-controllability at the time $\tau_k + \sum_{l = 1}^m \tau_{k + l}$.  Later \cite{AM16, CHO17}, feedbacks leading to the finite stabilization in time $ \tau_k + \tau_{k+1}$ were derived.

Set, as in \cite{CoronNg19, CoronNg20}
\begin{equation}\label{def-Top}
T_{opt} := \left\{ \begin{array}{cl}  \dsp \max \Big\{ \tau_1 + \tau_{m+1}, \dots, \tau_k + \tau_{m+k}, \tau_{k+1} \Big\} & \mbox{ if } m \ge k, \\[6pt]
\dsp \max \Big\{ \tau_{k+1-m} + \tau_{k+1},  \tau_{k+2-m} + \tau_{k+2},  \dots, \tau_{k} + \tau_{k+m} \Big\} &  \mbox{ if } m < k.
\end{array} \right.
\end{equation}
Define
\begin{equation}
\cB: = \Big\{B \in \mR^{k \times m}; \mbox{ such that  \eqref{cond-B-1} holds for  $1 \le i \le \min\{m-1, k\}$} \Big\},
\end{equation}
where
\begin{multline}\label{cond-B-1}
\mbox{ the $i \times i$  matrix formed from the last $i$ columns and the last $i$ rows of $B$  is invertible.}
\end{multline}
Using the backstepping approach, we  established  the null-controllability for the  linear inhomogeneous systems for the optimal time $T_{opt}$ under the condition $B := \nabla \B (0) \in \cB$ \cite{CoronNg19, CoronNg19-2} (see also \cite{CoronNg20} for the non-linear, homogeneous case).  This condition is very natural to obtain the null-controllability at $T_{opt}$ which roughly speaking  allows to  use the $l$ controls  $W_{k+ m - l + 1}, \cdots, W_{k+m}$  to control the $l$ directions $w_{k-l +1}, \cdots, w_{k}$ for $1 \le l \le \min\{k, m\}$ (the possibility to implement $l$ controls corresponding to the fastest  positive speeds to control  $l$ components corresponding to the lowest negative speeds \footnote{The $i$  direction ($1 \le i \le n$) is called positive (resp. negative) if $\lambda_i$ is positive (resp. negative).}). The optimality of $T_{opt}$ was given in \cite{CoronNg19}  (see also \cite{Weck}). Related exact controllability results can be also found in  \cite{LongHu, CoronNg19, CoronNg19-2, HO19}.   It is easy to see  that $\cB$ is an open subset of  the set of (real) $k\times m$ matrices  and the Hausdorff dimension of its complement is $\min\{k, m-1 \}$.

We previously obtained time independent feedbacks leading finite stabilization  for the optimal time $T_{opt}$ of the system \eqref{Sys}, \eqref{bdry-w-0}, and \eqref{bdry-w-1} when $B   \in {\mathcal B}$  in the linear case \cite{CoronNg19},  and in the nonlinear case \cite{CoronNg20}.
In this paper, we introduce Lyapunov functions for these feedbacks. As a consequence of our estimate on the decay rate of solutions via the Lyapunov functions (\Cref{thm1} and \Cref{thm2}), we are able to rediscover the finite stabilization results in the optimal time  \cite{CoronNg19, CoronNg20}.

To keep the notations simple in the introduction, from now on, we only discuss the linear setting, i.e., $\Sigma(x, y) = \Sigma(x)$ (so $\lambda_i(x, y) = \lambda_i(x)$) and $\B (\cdot) = B \cdot $ (recall that $B = \nabla \B (0)$). The nonlinear setting will be discussed in \Cref{sect-NL}.
The boundary condition at $x=0$ becomes
\begin{equation}\label{bdry-w-0-LN}
w_-(t, 0)  = B w_+(t,  0) \mbox{ for } t \ge 0.
\end{equation}
 We first introduce/recall some notations.
Extend $\lambda_i$ in $\mR$  with $1 \le i \le k+m$ by $\lambda_i(0)$ for $x< 0$ and $\lambda_i(1)$ for $x >1$.  For $(s, \xi) \in [0, T] \times [0, 1]$, define $x_i(t, s, \xi)$ for $t \in \mR$ by
\begin{equation}\label{def-xi-1}
\frac{d}{d t} x_i(t, s, \xi) = \lambda_i \big(x_i(t, s, \xi) \big) \mbox{ and }  x_i(s, s, \xi) = \xi \mbox{ if } 1 \le i \le k,
\end{equation}
and
\begin{equation}\label{def-xi-2}
\frac{d}{d t} x_i(t, s, \xi) = - \lambda_i \big(x_i(t, s, \xi) \big) \mbox{ and }  x_i(s, s, \xi) = \xi \mbox{ if } k+1 \le i \le k+ m
\end{equation}
(see \Cref{fig1}).

\begin{figure}
\centering
\begin{tikzpicture}[scale=3]

\newcommand\z{0.0}

\draw[->] (0+\z,0) -- (1.7+\z,0);
\draw[->] (0+\z,0) -- (0+\z,2.4);
\draw[] ({1.5*(1+\z},0) -- ({1.5*(1+\z)},2.4);
\draw (1.5+\z, 0) node[below]{$1$};
\draw (\z, 2.4) node[left]{$t$};
\draw (\z, 0) node[left, below]{$0$};

\draw[blue] ({1.5*(1+\z)}, 0) -- (\z, 0.5);
\draw[orange] ({1.5*\z}, 0.5) node[left, orange]{$\tau_6$};
\draw[orange] ({1.5*(\z + 0.4)}, 0.3) node[above, orange]{\small $x_6$};

\draw[dashed, orange] ({1.5*\z}, 0.5) -- ({1.5*(\z + 0.625)}, 0);
\draw ({1.5*(\z + 0.625)}, -0.02) node[below, orange]{\small $x_5(0, \tau_6,  0)$};

\draw[dashed, orange] ({1.5*\z}, 0.5) -- ({1.5*(\z + 0.3125)}, 0);
\draw ({1.5*(\z + 0.3125)}, -0.26) node[below, orange]{\small $x_4(0, \tau_6,  0)$};

\draw[red] ({1.5*\z}, 0.5) -- ({1.5*(\z + 1)}, 2);
\draw ({1.5*(\z + 1)}, 2) node[right, orange]{$\tau_3 + \tau_6$};
\draw[orange] ({1.5*(\z + 0.25)}, 0.8) node[below, orange]{\small $x_3$};


\draw[blue] ({1.5*(1+\z)}, 0) -- ({1.5*\z}, 0.8);
\draw ({1.5*\z}, 0.8) node[left, violet]{$\tau_5$};
\draw[orange] ({1.5*(\z + 0.4)}, 0.5) node[above, violet]{\small $x_5$};
\draw[dashed, violet] ({1.5*\z}, 0.8) -- ({1.5*(\z + 0.5)}, 0);
\draw ({1.5*(\z + 0.5)}, -0.14) node[below, violet]{\small $x_4(0, \tau_5,  0)$};
\draw[red] ({1.5*\z}, 0.8) -- ({1.5*(\z + 1)}, 1.8);
\draw ({1.5*(\z + 1)}, 1.8) node[right, violet]{$\tau_{2} + \tau_5$};
\draw[violet] ({1.5*(\z + 0.16)}, 1) node[above, violet]{\small $x_2$};

\draw[blue] ({1.5*(1+\z)}, 0) -- (\z, 1.6);
\draw ({1.5*\z}, 1.6) node[left]{$\tau_4$};
\draw ({1.5*(\z + 0.8)}, 0.5) node[above]{\small $x_4$};

\draw[red] ({1.5*\z}, 1.6) -- ({1.5*(\z + 1)}, 2.25);
\draw ({1.5*(\z + 1)}, 2.25) node[right]{$\tau_1 + \tau_4$};
\draw ({1.5*(\z + 0.5)}, 2) node[above]{\small $x_1$};

\draw[] (\z + 0.85, -0.6) node[left]{$a)$};


\newcommand\zz{2.5}

\draw[->] (0+\zz,0) -- (1.7+\zz,0);
\draw[->] (0+\zz,0) -- (0+\zz,2.4);
\draw[] (1.5+\zz,0) -- (1.5+\zz,2.4);

\draw[blue] (1.5+\zz, 0) -- (\zz, 0.75);
\draw[] (\zz + 0.3, 0.7) node[above]{\small $x_j(\cdot, 0, 1)$};

\draw[orange] (1.5+\zz, 0) -- (\zz, 1.6);

\draw[] (\zz + 1.1, 0.7) node[above]{\small $x_i(\cdot, 1, 0)$};

\draw (1.5+\zz, 0) node[below]{$1$};

\draw (\zz, 2.4) node[left]{$t$};

\draw (\zz, 0) node[left, below]{$0$};

\draw[blue, dashed] (0.4*15/7.5+\zz, 0) -- (\zz, 0.4);
\draw (0.4*15/7.5+\zz, 0) node[below]{$x$};

\draw[orange, dashed] (0.375+\zz, 0) -- (\zz, 0.4);
\draw (\zz + 0.4, 0) node[below, orange]{$a_{i, j}(x)$};

\draw[] (\zz, 0.4) node[left]{$\tau(j, x)$};
\draw[] (\zz + 0.55, 0.20) node[above]{\small $x_j(\cdot,  0, x)$};

\draw[] (\zz + 0.85, -0.6) node[left]{$b)$};

\end{tikzpicture}
\caption{a) $k = m = 3$,  $\Sigma$ is constant,  $x_1 = x_1(\cdot, \tau_4, 0)$,  $x_2 = x_2 (\cdot, \tau_5, 0)$,  $x_3 = x_3 (\cdot, \tau_4, 0)$,  $x_4 = x_4(\cdot, 0, 1)$, $x_5 = x_5(\cdot, 0, 1)$, and $x_6 = x_6(\cdot, 0, 1)$. b) $k+1 \le i  < j \le k+m$, and $\Sigma$ is constant.}\label{fig1}
\end{figure}
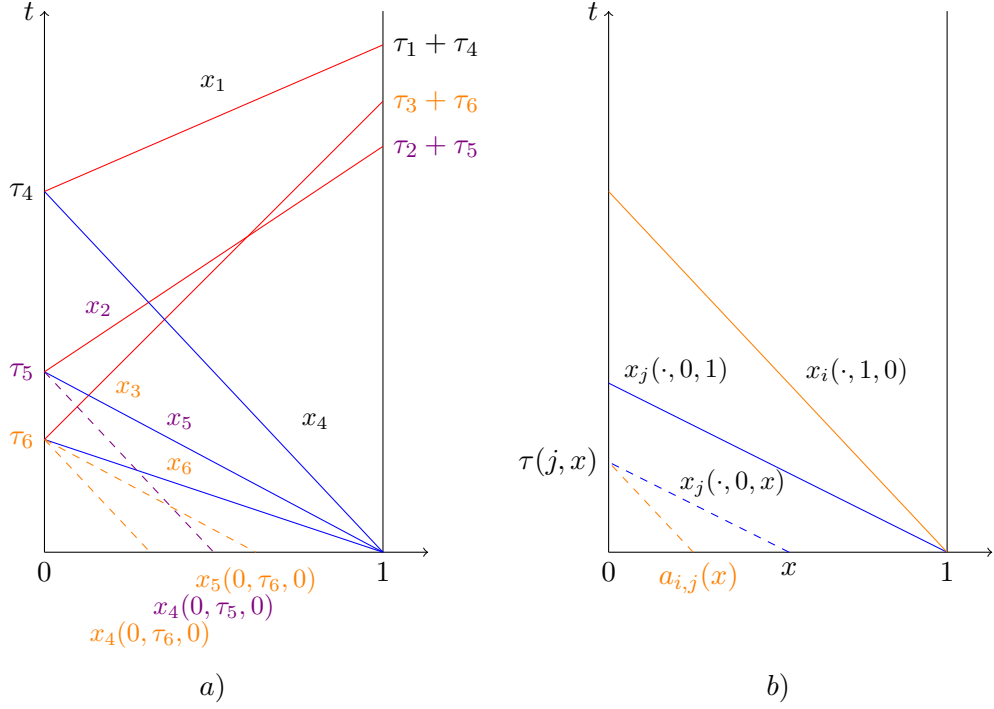
For $x \in [0, 1]$, and $k+1 \le j \le k + m$,  let $\tau(j, x) \in \mR_+$ be such that
$$
x_{j} \big(\tau(j, x), 0, x \big) = 0,
$$
and set, $k+1 \le i < j \le k + m$,
\begin{equation}\label{thm1-aij}
a_{i, j}(x) = x_{i} \big(0, \tau (j, x), 0 \big)
\end{equation}
(see \Cref{fig1}-b)). It is clear that $\tau(j, 1) = \tau_j$ for $k+1 \le j \le k+m$.

We now recall the feedback in \cite{CoronNg19}. We first consider the case $m \ge k$.  Using \eqref{cond-B-1} with $i = 1$, one can derive that $w_{k}(t, 0) = 0$ if and only if
\begin{equation}\label{kd-1}
w_{m+k}(t, 0) = M_k (w_{k +1 }, \cdots, w_{m + k - 1})\tr (t, 0),
\end{equation}
for some  constant  matrix  $M_k$ of size $1 \times (m  -1)$. Using \eqref{cond-B-1} with $i = 2$, one can derive that $w_{k}(t, 0) = w_{k-1}(t, 0)  = 0$ if and only if  \eqref{kd-1} and
\begin{equation}\label{kd-2}
w_{m+k-1}(t, 0) = M_{k-1} (w_{k +1 }, \cdots, w_{m + k - 2})\tr (t, 0)
\end{equation}
hold for some  constant  matrix  $M_{k-1}$ of size $1 \times (m  -2)$ by the Gaussian elimination method, etc. Finally, using \eqref{cond-B-1} with $i = k$, one can derive that $w_{k}(t, 0) = w_{k-1}(t, 0)  \dots = w_1(t, 0)= 0$ if and only if  \eqref{kd-1}, \eqref{kd-2}, \dots, and
\begin{equation}\label{kd-3}
w_{m+1}(t, 0) = M_{1} (w_{k +1 }, \cdots, w_{m})\tr (t, 0)
\end{equation}
hold  for some  constant  matrix  $M_{1}$ of size $1 \times (m  - k)$ by applying \eqref{cond-B-1} with $i= k$ and using the Gaussian elimination method when $m>k$. When $m=k$, similar fact holds with $M_1 = 0$.

The feedback is then given as follows:
\begin{equation}\label{bdry-1}
w_{m + k}(t, 1) = M_{k} \Big(w_{k+ 1}\big(t, x_{k+1} (0, \tau_{m + k},   0) \big), \dots, w_{k+ m-1}\big(t, x_{k+ m - 1} (0, \tau_{m + k },   0) \big)\Big),
\end{equation}
\begin{equation}\label{bdry-2}
w_{m + k - 1}(t, 1) = M_{k-1} \Big(w_{k+ 1}\big(t, x_{k+1} (0,  \tau_{m + k - 1},  0) \big), \dots, w_{k+ m-2}\big(t, x_{k+ m - 2} (0, \tau_{m + k -1},  0) \big) \Big),
\end{equation}
\dots
\begin{equation}\label{bdry-3}
w_{m+ 1}(t, 1) = M_{1} \Big(w_{k+ 1} \big(t, x_{k+1} (0,  \tau_{m + 1},  0) \big), \dots, w_{m}\big(t, x_{m +1} (0, \tau_{m +1},   0) \big)\Big),
\end{equation}
and
\begin{equation}\label{bdry-4}
w_{j}(t, 1)  = 0 \quad  \mbox{ for } k+1 \le j \le m.
\end{equation}
(see \Cref{fig1}-a)). \footnote{In \cite{CoronNg19}, we use $x_{i} (-\tau_j, 0, 0)$ with $k+1 \le i < j \le k+m$ in the feedback above. Nevertheless, $x_{i} (-\tau_j, 0, 0) = x_{i} (0, \tau_j, 0)$. }

We next deal with the case $m<k$. The construction in this case is based on the construction given in the case $m = k$.  The feedback is then given by
\begin{equation}\label{bdry-1-*}
w_{k+m}(t, 1) = M_{k} \Big(w_{k+ 1}\big(t, x_{k+1} (0, \tau_{k+m},   0) \big), \dots, w_{k+ m-1}\big(t, x_{k+m - 1} (0, \tau_{k+m},   0) \big)\Big),
\end{equation}
\begin{equation}\label{bdry-2-*}
w_{k+m - 1}(t, 1) = M_{k-1} \Big(w_{k+ 1}\big(t, x_{k+1} (0,  \tau_{k+m - 1},  0) \big), \dots, w_{k+m  -2}\big(t, x_{k + m - 2} (0, \tau_{k + m  -1},  0) \big) \Big),
\end{equation}
\dots
\begin{equation}\label{bdry-2-***}
w_{k+2}(t, 1) = M_{k+2 -m} \Big(w_{k+ 1}\big(t, x_{k+1} (0,  \tau_{k+m - 1},  0) \big) \Big),
\end{equation}
\begin{equation}\label{bdry-3-*}
w_{k+ 1}(t, 1) =  M_{k+1 -m},
\end{equation}
with the convention $M_{k+1 -m} =0$.

\begin{remark}\rm
The well-posedness of \eqref{Sys} with $\Sigma (x, y) = \Sigma(x)$, \eqref{bdry-w-0}, with the feedback given above for $w_0 \in \big[ L^\infty(0, 1) \big]^n$ is given by \cite[Lemma 3.2]{CoronNg19}. More precisely, for $w_0 \in \big[ L^\infty(0, 1) \big]^n$ and $T>0$, there exists a unique broad solution $w \in \big[L^\infty \big((0, T) \times [0, 1] \big) \big]^n \cap \big[C\big([0, T]); L^2(0, 1)\big)\big]^n \cap \big[C\big([0, 1]); L^2(0, T)\big)\big]^n$. The broad solutions are defined in \cite[Definition 3.1]{CoronNg19}. The proof is based on a fixed point argument using the norm
$$
\|w \| =  \sup_{1 \le i \le n} \mathop{\ess \sup}_{(\tau, \xi) \in (0, T) \times (0, 1)} e^{-L_1 \tau - L_2 \xi} |w_i(\tau, \xi)|, 
$$
where $L_1, L_2$ are two large positive numbers with  $L_1$ much larger than $L_2$. 
\end{remark}

Concerning these feedbacks, we have

\begin{theorem}\label{thm1} Let $m, \, k \ge 1$, and $w_0 \in \big[L^\infty(0, 1) \big]^n$. There exists a constant $C \ge 1$, depending only on $B$ and $\Sigma$, such that for all $q \ge 1$ and $\Lambda \ge 1$,  it holds
\begin{equation}\label{thm1-cl1}
\| w(t, \cdot) \|_{L^q(0, 1)} \le C e^{\Lambda \big( T_{opt} - t\big)} \| w(t = 0, \cdot) \|_{L^q(0, 1)} \mbox{ for } t \ge 0.
\end{equation}
As a consequence, we have
\begin{equation}\label{thm1-cl2}
\| w(t, \cdot) \|_{L^\infty(0, 1)} \le C e^{\Lambda \big( T_{opt} - t\big)} \| w(t = 0, \cdot) \|_{L^\infty(0, 1)} \mbox{ for } t \ge 0.
\end{equation}

\end{theorem}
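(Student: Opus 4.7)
The plan is to construct, for each $q\ge 1$ and $\Lambda\ge 1$, a weighted Lyapunov functional
\[
V_q^\Lambda(t)\;=\;\sum_{i=1}^{n}\int_0^1 p_i^\Lambda(x)\,|w_i(t,x)|^q\,dx,
\]
where the positive weights $p_i^\Lambda$ satisfy a two-sided bound $\alpha^{q}\le p_i^\Lambda(x)\le\beta^{q}e^{q\Lambda T_{opt}}$ on $[0,1]$, for constants $\alpha,\beta>0$ depending only on $B$ and $\Sigma$, and such that $\tfrac{d}{dt}V_q^\Lambda\le -q\Lambda V_q^\Lambda$ along the closed-loop flow. Granting these two facts, Gronwall's inequality gives $V_q^\Lambda(t)\le e^{-q\Lambda t}V_q^\Lambda(0)$, and sandwiching between $L^q$ norms yields $\|w(t,\cdot)\|_{L^q}\le(\beta/\alpha)\,e^{\Lambda(T_{opt}-t)}\|w(0,\cdot)\|_{L^q}$, which is \eqref{thm1-cl1} with $C=\beta/\alpha$ independent of $q$. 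The $L^\infty$ estimate \eqref{thm1-cl2} then follows by sending $q\to\infty$.

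For the weight construction I take the ansatz $p_i^\Lambda(x)=K_i\,e^{q\Lambda\phi_i(x)}/\lambda_i(x)$ with Lipschitz functions $\phi_i$ and positive constants $K_i$ to be calibrated. Differentiating in time, using $\partial_t w_i=\mp\lambda_i\partial_x w_i$ from \eqref{Sys} in the linear case $\Sigma(x,y)=\Sigma(x)$, and integrating by parts produces for each $i$ an interior integral $\pm\int_0^1(p_i^\Lambda\lambda_i)'|w_i|^q\,dx$ together with boundary terms at $x=0$ and $x=1$. The selection $\phi_i'(x)=-\lambda_i^{-1}(x)$ for $1\le i\le k$ and $\phi_i'(x)=+\lambda_i^{-1}(x)$ for $k+1\le i\le n$ converts the total interior contribution into exactly $-q\Lambda V_q^\Lambda$. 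At $x=0$ the linear relation $w_-(t,0)=Bw_+(t,0)$ reduces control of $\sum_{i\le k}p_i^\Lambda(0)\lambda_i(0)|w_i(t,0)|^q$ by $\sum_{j\ge k+1}p_j^\Lambda(0)\lambda_j(0)|w_j(t,0)|^q$ to inequalities of the form $K_j\,e^{q\Lambda\phi_j(0)}\ge c(B)\,K_i\,e^{q\Lambda\phi_i(0)}$. At $x=1$ the feedback \eqref{bdry-1}--\eqref{bdry-3-*} expresses each $w_{k+\ell}(t,1)$ as a linear combination of interior traces $w_{k+j}(t,a_{k+j,k+\ell}(1))$ with $j<\ell$ (and as $0$ for the slowest positive components); the points $a_{k+j,k+\ell}(1)$ are tailored so that, through the characteristic rule $w_{k+j}(t,a_{k+j,k+\ell}(1))=w_{k+j}(s(t),1)$ with $s(t)<t$, the feedback is equivalent to a delay identity on past boundary traces, and the contribution can be rendered non-positive by prescribing the additive offsets in $\phi_j(1)$ to absorb exactly these delays.

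The principal difficulty is the combinatorial calibration of the parameters $(K_i,\phi_i)$ so that \emph{both} boundary contributions are simultaneously non-positive, while keeping the upper bound $p_i^\Lambda\le\beta^{q}e^{q\Lambda T_{opt}}$. The offsets forced by the Gaussian-elimination cascade defining $M_k,\dots,M_1$ from $B\in\mathcal B$ must interlock with the travel-time offsets $\tau(k+j,a_{k+i,k+j}(1))$ generated by the nonlocal feedback, and the resulting stack of offsets must telescope to exactly $T_{opt}$ as given in \eqref{def-Top}. The two cases $m\ge k$ and $m<k$ correspond respectively to the cascades \eqref{bdry-1}--\eqref{bdry-4} and \eqref{bdry-1-*}--\eqref{bdry-3-*}, and produce the two branches in the definition of $T_{opt}$. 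This algebraic compatibility is precisely the content of the feedback design of \cite{CoronNg19}; once verified, the Lyapunov inequality $\dot V_q^\Lambda\le -q\Lambda V_q^\Lambda$ and the corresponding two-sided bound on $p_i^\Lambda$ are in place, and the two estimates of \Cref{thm1} follow.
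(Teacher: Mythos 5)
Your overall skeleton (exponential weights $p_i\sim \lambda_i^{-1}e^{\pm q\Lambda\int\lambda_i^{-1}}$, a differential inequality $\dot V\le -q\Lambda V$, Gronwall, and the ratio $\sup p/\inf p\le C^qe^{q\Lambda T_{opt}}$ producing the constant, with $q\to\infty$ for the $L^\infty$ bound) matches the paper. But there is a genuine gap at the single point where the real difficulty lies: your functional is the \emph{diagonal} one, $V=\sum_{i=1}^n\int_0^1 p_i|w_i|^q\,dx$, and for such a functional the claimed inequality $\dot V\le -q\Lambda V$ cannot hold. After integration by parts, each positive-speed component $i\ge k+1$ contributes the boundary term $+\,p_i(1)\lambda_i(1)|w_i(t,1)|^q$ with a \emph{bad} sign. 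For the fed-back components $w_{m+1},\dots,w_{m+k}$, the feedback \eqref{bdry-1}--\eqref{bdry-3} replaces $w_{m+i}(t,1)$ by values $w_{k+j}\bigl(t,a_{k+j,m+i}(1)\bigr)$ at \emph{interior} points (equivalently, past boundary traces $w_{k+j}(s,1)$, $s<t$). A point evaluation at an interior point at time $t$ is not dominated by the instantaneous volume integrals $\int_0^1 p_{k+j}|w_{k+j}(t,x)|^q\,dx$ nor by the instantaneous traces at $x=0,1$, for any choice of the multiplicative constants $K_i$ or additive offsets $\phi_i(1)$: those only rescale terms, they cannot turn a delayed/nonlocal trace into an instantaneous one. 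So the step ``the contribution can be rendered non-positive by prescribing the additive offsets in $\phi_j(1)$'' fails; at best you would get an integrated-in-time inequality, not the pointwise-in-time decay you invoke.

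The paper's resolution is precisely a different choice of functional: for $\ell+1\le m+i\le m+k$ one replaces $|w_{m+i}(t,x)|^q$ by $|T_{m+i}(t,x)|^q$ with $T_{m+i}(t,x)=w_{m+i}(t,x)-M_i\bigl(w_{k+1}(t,a_{k+1,m+i}(x)),\dots\bigr)$ as in \eqref{def-V}. The identity $a_{i,j}'(x)=\lambda_i(a_{i,j}(x))/\lambda_j(x)$ (see \eqref{thm1-claim1}--\eqref{thm1-du}) shows that $T_{m+i}$ satisfies the same transport equation as $w_{m+i}$, i.e.\ \eqref{thm1-ST}; its trace at $x=1$ vanishes \emph{identically} by the feedback law, so the bad boundary term disappears, while its trace at $x=0$ is exactly the quantity that dominates $|(Bw_+)_i(t,0)|^q$ through the Gaussian-elimination structure of $M_k,\dots,M_1$, which is what makes $\cW_1\le 0$. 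This modification creates a new obligation you do not address (and would not need in your diagonal setup): the equivalence of the resulting nonlocal quantity with the $L^q$ norm, which is the content of Lemma \ref{lemA1}. Without the passage from $|w_{m+i}|^q$ to $|T_{m+i}|^q$ (or some equivalent memory/Krasovskii-type device), the proof as proposed does not close.
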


As a consequence of \Cref{thm1}, the finite stabilization   in the optimal time $T_{opt}$ is achieved  by taking $\Lambda \to + \infty$ since $C$ is independent of $\Lambda$. The spirit of deriving appropriate information for $L^\infty$-norm from the one associated to  $L^q$-norm was also considered in \cite{BC15}.  The proof of \Cref{thm1} is based on considering the following Lyapunov function. Let $q \ge 1$ and let $\cV: [L^q(0, 1)]^n  \to \mR$ be defined by, with $\ell = \max\{m, k\}$,
\begin{multline}\label{def-V}
\cV(v) =  \sum_{i=1}^{\ell} \int_0^1  p_i(x)|v_i(x)|^q \, dx \\[6pt]
+ \mathop{\sum_{i}}_{\ell + 1 \le m + i \le k+m} \int_0^1   p_{m +i}(x)  \Big| v_{m+ i}(x) - M_{i} \Big(v_{k+ 1} \big(a_{k+1, m + i}(x) \big), \dots, v_{m + i -1}\big( a_{m +i - 1,  m +i}(x) \big)\Big) \Big|^q  \, dx,
\end{multline}
where
\begin{equation}\label{thm1-pA1}
p_i(x) = \lambda_i^{-1}(x) e^{ - q\Lambda \int_0^x \lambda_i^{-1}(s) \, ds + q \Lambda \int_0^1 \lambda_i^{-1}(s) \, ds} \quad \mbox{ for } 1 \le i \le k,
\end{equation}
\begin{equation}\label{thm1-pA2}
p_i(x) = \Gamma^q \lambda_i^{-1}(x) e^{q \Lambda \int_0^x \lambda_i^{-1}(s) \, ds} \quad \mbox{ for } k + 1 \le i \le \ell,
\end{equation}
\begin{equation}\label{thm1-pA3}
p_{m +i} (x) = \Gamma^q \lambda_{m+i}^{-1} (x) e^{q \Lambda \int_0^x \lambda_{m+i}^{-1}(s) \, ds + q \Lambda \int_0^1 \lambda_{i}^{-1}(s) \, ds} \quad \mbox{ for } \ell + 1 \le m + i \le m + k.
\end{equation}
for some large positive constant $\Gamma \ge 1$ depending only on $\Sigma$ and $B$ (it is independent of $\Lambda$ and $q$).

\begin{remark}  \rm Our Lyapunov functions are explicit. This is useful to study the robustness of our feedback laws with respect to disturbances. The use of Lyapunov functions is a classical tool to study the robustness of feedback laws for control system in finite dimension (see, for example, \cite[Sections 4.6, 4.7, 5.5.2, 11.7]{2009-Malisoff-Mazenc-book}. For 1-D hyperbolic systems Lyapunov functions are in particular used for the study of a classical robustness property called the Input-to-State Stability (ISS); see, for example, \cite{2012-Prieur-Mazenc-SCL,2019-Hayat-preprint, 2020-Ferrante-Prieur-2020, 2020-Weldegiyorgis-Banda-preprint}.
\end{remark}

\medskip

The paper is organized as follows. \Cref{sect-LN}  is devoted to the  proof of \Cref{thm1}. The nonlinear setting is considered in \Cref{sect-NL}. The main result there is \Cref{thm2}, which is a variant of \Cref{thm1}.
In the appendix, we will establish a lemma, which is used in the proof of \Cref{thm1} and \Cref{thm2}.

\section{Analysis for the linear setting - Proof of \Cref{thm1}} \label{sect-LN}

This section containing two subsections is devoted to the proof of \Cref{thm1}. The first one is on the case $m \ge k$ and the second one is on the case $m < k$.

\subsection{Proof of \Cref{thm1} for $m \ge k$} \label{sect-LN1} One can check that $a_{i, j}$ is of class $C^1$ since $\Lambda$ is of class $C^1$ (see, for example, \cite[Chapter V]{1964-Hartman-book}).
We claim that, for $k+1 \le i < j \le k+m$ and for $x \in [0, 1]$,
\begin{equation}\label{thm1-claim1}
a_{i, j}'(x) =  \lambda_{i} \big(a_{i, j}(x) \big)/ \lambda_{j}(x).
\end{equation}
Indeed, by the characteristic method and the definition of $a_{i, j}$ and $\tau(j, \cdot)$ (see  also \Cref{fig1}-b)), we have
\begin{equation*}
a_{i, j} \big( x_j(t, 0, x) \big)  =   x_i \big(t, \tau(j, x), 0 \big) \mbox{ for } 0 \le t \le \tau(j, x).
\end{equation*}
Taking the derivative with respect to $t$ gives
\begin{equation*}
a_{i, j}'  \big( x_j(t, 0, x) \big)  \partial_t x_j(t, 0, x) =   \partial_t x_i \big(t, \tau(j, x), 0 \big).
\end{equation*}
This implies, by the definition of $x_i$ and $x_j$,
\begin{equation*}
a_{i, j}'  \big( x_j(t, 0, x) \big)  \lambda_j \big(x_j(t, 0, x) \big)  =  \lambda_i \big(x_i(t, \tau(j, x), 0) \big).
\end{equation*}
Considering $t =0$, we obtain \eqref{thm1-claim1}.

As a consequence of \eqref{thm1-claim1}, we have \begin{equation}\label{thm1-du}
\partial_x \Big(w_{i} \big(t, a_{i, j}(x) \big)\Big) =  \frac{\lambda_{i} \big(a_{i, j}(x) \big)}{\lambda_{j}(x)} \partial_x w_{i} \big(t, a_{i, j}(x) \big).
\end{equation}
Identity \eqref{thm1-du}  is one of the key ingredients in deriving properties for   $\frac{d}{dt} \cV \big(w(t, \cdot) \big)$, which  will be done next.

In what follows, we assume that $w$ is smooth. The general case will follow by a standard approximation argument. Set
\begin{multline}\label{thm1-S}
S_{m+i}(t, x) = \lambda_{m+i}(x) \partial_x w_{m+ i}(t, x)
- M_{i} \Big(\lambda_{k+1}\big(a_{k+1, m+i}(x) \big) \partial_x w_{k+ 1} \big(t, a_{k+1, m+i}(x) \big), \\[6pt]  \dots, \lambda_{m+i-1}\big(a_{m+i - 1, m+i}(x) \big) \partial_x w_{m+ i -1}\big(t, a_{m+i - 1, m+i}(x) \big)\Big),
\end{multline}
and
\begin{equation}\label{thm1-T}
T_{m+i}(t, x) = w_{m+ i}(t, x) - M_{i} \Big(w_{k+ 1} \big(t, a_{k+1, m+i}(x) \big), \dots, w_{m+ i -1}\big(t, a_{m+i - 1, m+i}(x) \big)\Big).
\end{equation}
Since $M_i$ is constant, it follows from the definition of $\cV(v)$ and \eqref{Sys} that, for $t \ge 0$,
\begin{equation}\label{thm1-dV}
\frac{d}{dt} \cV(w(t, \cdot))  = \cU_1 (t) + \cU_2 (t),
\end{equation}
where
\begin{equation}\label{thm1-w1}
\cU_1 (t)  =  - \sum_{i=1}^k  \int_0^1  p_i(x) \lambda_i(x)  \partial_x |w_i(t, x)|^q \, dx  + \sum_{i=k+1}^m\int_0^1  p_i(x) \lambda_i(x)  \partial_x |w_i(t, x)|^q \, dx,
\end{equation}
and
\begin{equation}\label{thm1-w2}
\cU_2 (t) =   \sum_{i =1}^{k} \int_0^1 q  p_{m +i}(x) S_{m+i}(t, x) |T_{m+i }(t, x)|^{q-2} T_{m+i }(t, x) \, dx.
\end{equation}

We next consider $\cU_1$. An integration by parts yields
\begin{multline}\label{thm1-w1-p1-1}
\cU_1 (t) =   \sum_{i=1}^k \int_0^1  (\lambda_i p_i)'(x)  |w_i(t, x)|^q \, dx  -  \sum_{i=k+1}^m \int_0^1  (\lambda_i p_i)'(x)  |w_i(t, x)|^q \, dx \\[6pt]
- \sum_{i=1}^k \lambda_i (x) p_i(x)  |w_i(t, x)|^q \Big|_0^1 + \sum_{i=k+1}^m \lambda_i (x) p_i(x)  |w_i(t, x)|^q \Big|_0^1.
\end{multline}
Using the feedback \eqref{bdry-4} and the boundary condition \eqref{bdry-w-0}, we obtain
\begin{multline}\label{thm1-w1-p1}
\cU_1 (t) =  \sum_{i=1}^k  \int_0^1  (\lambda_i p_i)'(x)   |w_i(t, x)|^q \, dx  - \sum_{i=k+1}^m  \int_0^1  (\lambda_i p_i)'(x) |w_i(t, x)|^q \, dx \\[6pt]
- \sum_{i=1}^k \lambda_i (1) p_i(1)  |w_i(t, 1)|^q +  \sum_{i=1}^k \lambda_i (0) p_i(0)  |(Bw_+)_i(t, 0)|^q - \sum_{i=k+1}^m \lambda_i (0) p_i(0)  |w_i(t, 0)|^q .
\end{multline}

We next deal with $\cU_2$. Using \eqref{thm1-du}, we derive from the definition of $S_{m+i}$ that
\begin{multline}\label{thm1-S-new}
S_{m+i}(t, x) = \lambda_{m+i}(x) \partial_x w_{m+ i}(t, x)
- \lambda_{m+i} (x)  M_{i} \Big(\partial_x \big( w_{k+ 1} \big(t, a_{k+1, m+i}(x) \big) \big), \\[6pt]  \dots,  \partial_x \big( w_{m+ i -1}\big(t, a_{m+i - 1, m+i}(x) \big) \big)\Big),
\end{multline}
which yields, since $M_i$ is constant,
\begin{equation}\label{thm1-ST}
S_{m+i}(t, x) = \lambda_{m+i}(x) \partial_x T_{m+i}(t, x).
\end{equation}
Combining  \eqref{thm1-w2} and \eqref{thm1-ST},  and integrating  by parts yield
\begin{equation}\label{thm1-w2-p1-1}
\cU_2 (t) =  -  \sum_{i =1}^{k}  \int_0^1 (\lambda_{m+i} p_{m+i})'(x) |T_{m+i}(t, x)|^q + \sum_{i =1}^{k} \lambda_{m+i}(x) p_{m+i} (x) |T_{m+i}(t, x)|^q \Big|_0^1.
\end{equation}
By the feedback laws \eqref{bdry-1}-\eqref{bdry-3}, the boundary term in the RHS of \eqref{thm1-w2-p1-1} is
$$
-  \sum_{i = 1}^{k} \lambda_{m+i} (0) p_{m +i} (0) \Big| w_{m+ i}(t, 0) - M_{i} \Big(w_{k+ 1} (t, 0), \dots, w_{m+ i -1} (t,  0)\Big) \Big|^q .
$$
One then has
\begin{multline}\label{thm1-w2-p1}
\cU_2 (t) =  - \sum_{i =1}^{k}  \int_0^1  (\lambda_{m+i} p_{m+i} )'(x) |T_{m+i}(t, x)|^q  \\[6pt]
-  \sum_{i = 1}^{k} \lambda_{m+i} (0) p_{m +i} (0) \Big| w_{m+ i}(t, 0) - M_{i} \Big(w_{k+ 1} (t, 0), \dots, w_{m+ i -1} (t,  0)\Big) \Big|^q.
\end{multline}

From \eqref{thm1-w1-p1} and \eqref{thm1-w2-p1}, we obtain
\begin{equation}\label{thm1-wW}
\cU_1 (t) + \cU_2 (t) = \cW_1(t) + \cW_2 (t),
\end{equation}
where
 \begin{multline}\label{thm1-W1}
 \cW_1(t) = - \sum_{i = 1}^k  \lambda_i (1) p_i (1) |w_i(t, 1)|^q  +  \sum_{i = 1}^k  \lambda_i (0) p_i (0) | (B w_+)_i(t, 0)|^q \\[6pt]  - \sum_{i=k+1}^m \lambda_i (0) p_i(0)  |w_i(t, 0)|^q
  -   \sum_{i = 1}^{k} \lambda_{m+i} (0) p_{m +i} (0) \Big| w_{m+ i}(t, 0) - M_{i} \Big(w_{k+ 1} (t, 0), \dots, w_{m+ i -1} (t,  0)\Big) \Big|^q,
\end{multline}
and
\begin{multline}\label{thm1-W2}
\cW_2 (t)=  \sum_{i=1}^k \int_0^1  (\lambda_i p_i)' (x)  |w_i(t, x)|^q \, dx -   \sum_{i=k+1}^m \int_0^1 ( \lambda_{i} p_{i})' (x) |w_{i}(t, x)|^q \, dx \\[6pt]
 - \sum_{i=1}^k \int_0^1  (\lambda_{m+i} p_{m+i})' (x)  \Big| w_{m+ i}(t, x) - M_{i} \Big(w_{k+ 1} \big(t, a_{k+1, m+i}(x) \big), \dots, w_{m+ i -1}\big(t,  a_{m+i - 1, m+i}(x) \big)\Big) \Big|^q \, dx.
 \end{multline}

On the other hand,  \eqref{thm1-pA1},   \eqref{thm1-pA2}, and \eqref{thm1-pA3} imply
\begin{equation}\label{thm1-p3}
(\lambda_i p_i)' = - q \Lambda p_i  \quad  \mbox{ for } 1 \le i \le k,
\end{equation}
\begin{equation}\label{thm1-p4}
(\lambda_i p_i)' = q \Lambda p_i  \quad  \mbox{ for } k+1 \le i \le k+m.
\end{equation}
Using \eqref{thm1-p3} and \eqref{thm1-p4}, we derive from \eqref{thm1-W2} that
\begin{equation}\label{thm1-W2-p1}
\cW_2(t) = - q \Lambda \cV(t).
\end{equation}

We have, by the Gaussian elimination process,
\begin{equation*}
\sum_{i=j}^{k}\Big| w_{m+ i}(t, 0) - M_{i} \Big(w_{k+ 1} (t, 0), \dots, w_{m+ i -1} (t,  0)\Big) \Big| \ge C\sum_{i=j}^{k}  |(B w_+)_i (t, 0)|.
\end{equation*}
for $j=k$, then $j=k-1$, \dots, and finally for $j=1$.  Using the fact
$$
\int_0^1 \lambda_{i_1}^{-1}(s) \, d s < \int_0^1 \lambda_{i_2}^{-1}(s) \, d s \mbox{ for } 1 \le i_1 < i_2 \le k,
$$
and, for $a_i \ge 0$ with $1 \le i \le j \le k$ and $1\le q < + \infty$,
$$
\Big( \sum_{i=1}^j a_i \Big)^q \le C^q \sum_{i=1}^j a_i^q,
$$
for some positive constant $C$ independent of $q$ and $a_i$,
we derive from \eqref{thm1-pA1} and \eqref{thm1-pA3} that, for large $\Gamma$ (the largeness of $\Gamma$ depends only on $B$, $k$, and $l$; it is in particular independent of $\Lambda$ and $q$),
\begin{multline*}
\sum_{i = 1}^{k} \lambda_{m+i} (0) p_{m +i} (0) \Big| w_{m+ i}(t, 0) - M_{i} \Big(w_{k+ 1} (t, 0), \dots, w_{m+ i -1} (t,  0)\Big) \Big|^q \\[6pt]    \ge  \sum_{i = 1}^k  \lambda_i (0) p_i (0) | (B w_+)_i(t, 0)|^q.
\end{multline*}
It follows from \eqref{thm1-W1} that
\begin{equation}\label{thm1-W1-p2}
\cW_1 (t) \le 0.
\end{equation}

Combining \eqref{thm1-dV}, \eqref{thm1-wW}, \eqref{thm1-W2-p1}, and \eqref{thm1-W1-p2} yields
\begin{equation*}
\frac{d}{dt} \cV\big(w(t, \cdot) \big) \le - q\Lambda \cV\big(w(t, \cdot) \big).
\end{equation*}
This implies
\begin{equation}\label{thm1-decay}
\cV\big(w(t, \cdot) \big) \le e^{- q \Lambda t}  \cV \big(w(0, \cdot) \big).
\end{equation}

Set
\begin{equation}\label{thm1-Aa}
A = \mathop{\sup_{1 \le i \le n}}_{x \in (0, 1)} p_{i}(x) \quad  \mbox{ and } \quad a = \mathop{\inf_{1 \le i \le n}}_{x \in (0, 1)}p_{i}(x),
\end{equation}
and define, for $v \in [L^2(0, 1)]^n$,
\begin{multline}\label{norm-V}
\| v \|_{\cV}^q = \int_0^1 \sum_{i=1}^m |v_i(x)|^q \, dx \\[6pt]
+ \int_0^1  \sum_{i =1}^{k} \Big| v_{m+ i}(x) - M_{i} \Big(v_{k+ 1} \big(a_{k+1, m+i}(x) \big), \dots, v_{m+ i -1}\big( a_{m+i - 1, m+i}(x) \big)\Big) \Big|^q  \, dx.
\end{multline}

Using \eqref{thm1-pA1}, \eqref{thm1-pA2}, \eqref{thm1-pA3},   and the definition of $T_{opt}$ \eqref{def-Top}, one can check that
\begin{equation}\label{thm1-A/a}
A/a \le  C^q e^{q \Lambda T_{opt}},
\end{equation}
for some positive constant $C$ depending only on $\Gamma$ and $\Sigma$.  It follows that
\begin{multline*}
\| w(t, \cdot) \|_{\cV}^q \mathop{\le}^{\eqref{thm1-Aa}, \eqref{norm-V}} \frac{1}{a} \cV\big(w(t, \cdot) \big)
 \mathop{\le}^{\eqref{thm1-decay}} \frac{1}{a} e^{-q \Lambda t} \cV\big(w(0, \cdot) \big) \\[6pt]
 \mathop{\le}^{\eqref{thm1-Aa}, \eqref{norm-V}} \frac{A}{a}  e^{-q \Lambda t}  \| w_0 \|_{\cV}^q \mathop{\le}^{\eqref{thm1-A/a}} C^q e^{q \Lambda (T_{opt} - t)} \| w_0 \|_{\cV}^q.
\end{multline*}
Since $\| v \|_{\cV} \sim \| v \|_{L^q(0,1)}$ for $v \in \big[ L^q(0, 1) \big]^n$ by \Cref{lemA1} in the appendix, assertion~\eqref{thm1-cl1} follows.

\medskip
It is clear that \eqref{thm1-cl2} is a consequence of \eqref{thm1-cl1} by taking $q \to + \infty$.
 \qed

\subsection{Proof of \Cref{thm1} for $m < k$} \label{sect-LN2} The proof of \Cref{thm1} for $m<k$ is similar to the one for $m \ge k$.  Indeed, one  has
\begin{equation}\label{thm1-pp1}
\cW_2(t) = - \Lambda \cV.
\end{equation}
We have, by the Gaussian elimination process, for $k+1 \le m + j \le m +k$,
\begin{equation*}
\mathop{\sum_{i}}_{m+ j \le m + i \le m+k}\Big| w_{m+ i}(t, 0) - M_{i} \Big(w_{k+ 1} (t, 0), \dots, w_{m+ i -1} (t,  0)\Big) \Big| \ge C \mathop{\sum_{i}}_{m+ j \le m + i \le m+k} |(B w_+)_i (t, 0)|.
\end{equation*}
and, for $1 \le j \le k-m$,
\begin{equation*}
\mathop{\sum_{i}}_{k+1 \le m + i \le m+k}\Big| w_{m+ i}(t, 0) - M_{i} \Big(w_{k+ 1} (t, 0), \dots, w_{m+ i -1} (t,  0)\Big) \Big|
\ge C  |(B w_+)_j (t, 0)|.
\end{equation*}
Using the fact
$$
\int_0^1 \lambda_{i_1}^{-1}(s) \, d s < \int_0^1 \lambda_{i_2}^{-1}(s) \, d s \mbox{ for } 1 \le i_1 < i_2 \le k,
$$
we derive from \eqref{thm1-pA1} and \eqref{thm1-pA3} that, for large $\Gamma$ (the largeness of $\Gamma$ depends only on $B$, $k$, and $l$; it is in particular independent of $\Lambda$ and $q$),
\begin{multline*}
\mathop{\sum_{i}}_{k+1 \le m + i \le m+k} \lambda_{m+i} (0) p_{m +i} (0) \Big| w_{m+ i}(t, 0) - M_{i} \Big(w_{k+ 1} (t, 0), \dots, w_{m+ i -1} (t,  0)\Big) \Big|^q \\[6pt]    \ge  \sum_{i = 1}^k  \lambda_i (0) p_i (0) | (B w_+)_i(t, 0)|^q.
\end{multline*}
One can then derive that
\begin{equation}\label{thm1-pp2}
\cW_1 (t) \le 0.
\end{equation}
Combining \eqref{thm1-pp1} and \eqref{thm1-pp2} yields
$$
\frac{d}{dt} \cV(t) \le - \Lambda \cV(t).
$$
The conclusion now follows as in the proof of \Cref{thm1} for $m \ge k$. The details are omitted. \qed

\section{On the nonlinear setting} \label{sect-NL}

The  following result  was established in \cite{CoronNg20}.

\begin{proposition} \label{pro1} Assume that $\nabla B(0)  \in {\mathcal B}$.  For any $T > T_{opt}$, there exist $\eps > 0$ and a time-independent feedback control for \eqref{Sys}, \eqref{bdry-w-0}, and \eqref{bdry-w-1}  such that if the compatibility conditions $($at $x =0$$)$  \eqref{compatibility-0} and \eqref{compatibility-1} below hold for $w(0, \cdot)$,
$$
\left(\| w(0, \cdot) \|_{C^1([0, 1])} < \eps\right)\Rightarrow \left(w(T, \cdot) =0\right).
$$
\end{proposition}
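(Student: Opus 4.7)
The plan is to extend the linear Lyapunov argument of \Cref{thm1} to the quasilinear system \eqref{Sys}, using a state-dependent analog of the feedback and of the Lyapunov functional, and absorbing the quasilinear perturbations into the main dissipation term when $\|w\|_{C^1}$ is small.

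First, since $\nabla \B(0) \in \cB$, the implicit function theorem produces $C^2$ nonlinear maps $M_i^{\rm NL}$ that reduce to the linear matrices $M_i$ of \Cref{sect-LN1} at the origin; one also introduces state-dependent characteristic points $a_{i,j}^w(t,x)$ by integrating \eqref{def-xi-1}--\eqref{def-xi-2} with the speeds $\lambda_i(\cdot, w(t,\cdot))$ in place of $\lambda_i(\cdot, 0)$. The feedback \eqref{bdry-1}--\eqref{bdry-4} is then taken with $M_i$ and $a_{i,j}$ replaced by $M_i^{\rm NL}$ and $a_{i,j}^w$. Local $C^1$ well-posedness on $[0,T]\times [0,1]$ for $C^1$-small initial data satisfying the compatibility conditions \eqref{compatibility-0}--\eqref{compatibility-1} follows from a fixed-point argument along characteristics in the spirit of \cite[Lemma 3.2]{CoronNg19}, producing a classical solution that remains in a $C^1$-ball of controlled radius.

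Next, introduce the nonlinear Lyapunov function
$$
\cV(w(t,\cdot)) = \sum_{i=1}^{\ell}\int_0^1 p_i^w(x)\,|w_i(t,x)|^q\,dx \;+\; \mathop{\sum_{i}}_{\ell+1\le m+i\le k+m} \int_0^1 p_{m+i}^w(x)\,|T_{m+i}^w(t,x)|^q\,dx,
$$
where $p_i^w$ and $T_{m+i}^w$ are defined exactly as in \eqref{thm1-pA1}--\eqref{thm1-pA3} and \eqref{thm1-T} but with $\lambda_i(\cdot,w(t,\cdot))$ and $a_{i,j}^w(t,\cdot)$ in place of $\lambda_i(\cdot,0)$ and $a_{i,j}(\cdot)$. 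Differentiating along \eqref{Sys} and integrating by parts in $x$ as in \eqref{thm1-w1-p1}--\eqref{thm1-w2-p1} produces three kinds of terms: the main dissipation $-q\Lambda\,\cV$ exactly as in \eqref{thm1-W2-p1}; a boundary contribution $\cW_1(t)\le 0$, obtained from the Gaussian elimination argument of \Cref{sect-LN1} applied to the Taylor expansion $\B(w_+) = \nabla \B(0)\, w_+ + O(|w_+|^2)$ for $\Gamma$ large enough and $\|w(t,0)\|$ small; and new quasilinear remainders (coming from $\partial_t p_i^w$, from the $t$-dependence of $a_{i,j}^w$, and from the fact that $(\lambda_i p_i)' + q\Lambda p_i$ no longer vanishes identically once $\lambda_i=\lambda_i(x,w)$) which are bounded by $C q \|w(t,\cdot)\|_{C^1}\,\cV$, with $C$ independent of $\Lambda$ and $q$.

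This yields the differential inequality $\frac{d}{dt}\cV(w(t,\cdot)) \le -(q\Lambda - Cq\eta)\,\cV(w(t,\cdot))$ whenever $\|w(t,\cdot)\|_{C^1} \le \eta$, from which $\|w(t,\cdot)\|_{L^q} \le C' e^{\Lambda(T_{opt} - t)}\|w(0,\cdot)\|_{L^q}$ follows as in \Cref{sect-LN1}, with $C'$ independent of $\Lambda$. The main obstacle is to maintain the smallness hypothesis $\|w(t,\cdot)\|_{C^1}\le \eta$ over $[0,T]$, since the Lyapunov inequality only controls an $L^q$-type quantity; this is handled by running a parallel Lyapunov estimate on the derivative $\partial_x w$ obtained by differentiating \eqref{Sys} together with the boundary and feedback relations, which yields $\|w(t,\cdot)\|_{C^1}\le 2C\|w(0,\cdot)\|_{C^1}$ on $[0,T]$ as soon as $\|w(0,\cdot)\|_{C^1}<\eps$ for $\eps=\eps(T)$ small enough. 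Finally letting $q\to\infty$ and then $\Lambda\to\infty$, and using that $C'$ is independent of $\Lambda$, forces $w(T,\cdot)=0$ for every $T>T_{opt}$, which is precisely \Cref{pro1}.
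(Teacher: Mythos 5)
Your overall strategy is the one the paper actually follows in \Cref{thm2}: a quasilinear version of the feedback built from the nonlinear maps $M_i$ and state-dependent characteristics, a weighted $L^q$ Lyapunov functional whose dissipation absorbs the quasilinear remainders when $\|w\|_{C^1}$ is small, a parallel Lyapunov estimate on a derivative of $w$ (the paper uses $\partial_t w$, you use $\partial_x w$; these are equivalent through \eqref{Sys}) to propagate the $C^1$ smallness and upgrade the estimate to $W^{1,q}$ and then $C^1$ as $q\to\infty$, and finally $\Lambda\to\infty$ to force $w(T,\cdot)=0$. Up to minor presentational differences (you let the weights $p_i$ depend on $\lambda_i(\cdot,w)$ and treat $\partial_t p_i^w$ as a remainder, whereas the paper freezes the weights at $\lambda_i(\cdot,0)$ and perturbs the ratio $\lambda_i(x,w)/\lambda_i(x,0)$), this is the same proof.

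There is, however, one genuine gap: you never address the compatibility conditions at $x=1$. In the quasilinear setting one needs a classical $C^1$ solution, hence $C^0$ and $C^1$ compatibility of the initial data with the boundary condition at \emph{both} corners. At $x=0$ this is hypothesis \eqref{compatibility-0}--\eqref{compatibility-1}, but at $x=1$ the boundary condition is the feedback itself, and the ``pure'' feedback you propose (the direct nonlinear analogue of \eqref{bdry-1}--\eqref{bdry-4}) is generically incompatible with the initial data there --- e.g.\ the analogue of \eqref{bdry-4} would force $w_{0,j}(1)=0$ for $k+1\le j\le m$, which fails for generic $C^1$-small data. So your claimed ``local $C^1$ well-posedness \dots producing a classical solution'' does not hold as stated. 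The paper repairs this by augmenting the feedback with the auxiliary autonomous variables $\zeta_j,\eta_j$ in \eqref{bdry-1-NL}--\eqref{bdry-k-m}, chosen so that at $t=0$ the control matches $w_0$ at $x=1$ to first order and relaxes to the pure feedback by time $\delta/2$ with $\delta=T-T_{opt}$; this is precisely why the decay estimate in \Cref{thm2} only starts at $t=\delta/2$, why the extra terms $\|\zeta\|_{C^1}$ and $\|\eta\|_{C^1}$ appear in \eqref{thm2-cl1}--\eqref{thm2-cl2}, and why the quasilinear result requires $T>T_{opt}$ strictly rather than $T=T_{opt}$. Your argument needs this (or an equivalent) transition mechanism; without it the solution leaves $C^1$ immediately and the Lyapunov computation cannot even be started.
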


In what follows, we denote, for $x \in [0,1]$ and $y \in \mR^n$,
$$
\Sigma_-(x, y)  = \mbox{diag} \Big(- \lambda_1(x, y), \cdots, - \lambda_{k}(x, y) \Big) \mbox{ and } \Sigma_+(x, y)  = \mbox{diag} \Big( \lambda_{k+1}(x, y), \cdots,  \lambda_{n}(x, y) \Big).
$$
The compatibility conditions considered in \Cref{thm2} are:
\begin{equation}\label{compatibility-0}
w_-(0, 0) = \B  \big( w_+(0, 0) \big)
\end{equation}
and
\begin{equation}\label{compatibility-1}
\Sigma_- \big(0, w(0, 0)\big) \partial_x w_-(0, 0)=\nabla \B \big( w_+(0, 0) \big)  \Sigma_+ \big(0, w(0, 0)\big) \partial_x w_+(0, 0).
\end{equation}

We next describe the feedback given in the proof of \Cref{pro1} in \cite{CoronNg20}. Let $x_j$ be defined as
$$
\frac{d}{dt}x_j(t, s, \xi) =   \lambda_j \Big(x_j(t, s, \xi), w \big(t, x_j(t, s, \xi) \big) \Big) \quad \mbox{ and } \quad x_j(s, s, \xi) = \xi  \mbox{ for } 1 \le j \le k,
$$
and
$$
\frac{d}{dt}x_j(t, s, \xi) = -  \lambda_j \Big(x_j(t, s, \xi), w \big(t, x_j(t, s, \xi) \big) \Big) \quad \mbox{ and } \quad x_j(s, s, \xi) = \xi  \mbox{ for } k+1 \le j \le k + m.
$$
We do not precise at this stage the domain of the definition of $x_j$. Later, we only consider the flows in the regions where the solution $w$ is well-defined.

To arrange the compatibility of our controls, we also introduce auxiliary variables
satisfying autonomous dynamics.  Set $\delta = T- T_{opt} > 0$. For $t \ge 0$, let, for $k+1 \le j \le k+m$,
\begin{equation}
\zeta_{j}(0) = w_{0, j}(1), \quad \zeta_j'(0) = \lambda_j\big(1, w_0(1)\big) w_{0, j}'(1), \quad \zeta_{j}(t) = 0 \mbox{ for } t \ge \delta/2,
\end{equation}
and
\begin{equation}
\eta_j(0) = 1, \quad \eta_j'(0)=0, \quad \eta_{j}(t) = 0 \mbox{ for } t \ge \delta/2.
\end{equation}

We first deal with the case $m \ge k$.
Consider the last equation of \eqref{bdry-w-0}. Impose the condition $w_{k}(t, 0) = 0$. Using \eqref{cond-B-1} with $i = 1$ and the implicit function theorem, one can then write the last  equation of \eqref{bdry-w-0} under the form
\begin{equation}\label{kd-1-new}
w_{m+k}(t, 0) = M_k  \Big( w_{k +1 }(t, 0), \cdots, w_{m + k - 1} (t, 0) \Big),
\end{equation}
for some $C^2$ nonlinear map  $M_k$ from $U_k$ into $\mR$ for some neighborhood $U_k$ of $0 \in \mR^{m-1}$ with $M_k(0) = 0$ provided that $|w_{+}(t, 0)|$ is sufficiently small.

Consider the last two equations of \eqref{bdry-w-0} and impose the condition $w_{k}(t, 0) = w_{k-1}(t, 0) = 0$. Using \eqref{cond-B-1} with $i = 2$ and  the Gaussian elimination approach,  one can then write these two equations under the form \eqref{kd-1-new} and
\begin{equation}\label{kd-2-new}
w_{m+k-1}(t, 0) = M_{k-1} \Big( w_{k +1 }(t, 0), \cdots, w_{m + k - 2} (t, 0) \Big),
\end{equation}
for some $C^2$ nonlinear map  $M_{k-1}$ from $U_{k-1}$ into $\mR$ for some neighborhood $U_{k-1}$ of $0 \in \mR^{m-2}$ with $M_{k-1}(0) = 0$ provided that $|w_{+}(t, 0)|$ is sufficiently small, etc. Finally, consider the  $k$ equations of \eqref{bdry-w-0} and impose the condition $w_{k}(t, 0) = \dots = w_{1}(t, 0) = 0$. Using \eqref{cond-B-1} with $i = k$ and the Gaussian elimination approach,  one can then write these $k$ equations under the form \eqref{kd-1-new}, \eqref{kd-2-new}, \dots, and \begin{equation}\label{kd-k}
w_{m+1}(t, 0) = M_{1} \Big(w_{k +1 }(t, 0), \cdots, w_{m} (t, 0) \Big),
\end{equation}
for some $C^2$ nonlinear map  $M_1$ from $U_1$ into $\mR$ for some neighborhood $U_1$ of $0 \in \mR^{m-k}$ with $M_1(0) = 0$ provided that $|w_{+}(t, 0)|$ is sufficiently small for $m>k$. When $m=k$, we just define $M_1 = 0$.

\medskip
We are ready to construct a feedback law for the null-controllability at the time $T$.  Let $t_{m+k}$ be such that
$$
x_{m+k}(t+ t_{m+k}, t, 1) = 0.
$$
It is clear that $t_{m+k}$ depends only on the current state $w(t, \cdot)$. Let $D_{m+k} = D_{m+k}(t) \subset \mR^2$ be the open set whose boundary is $\{ t \} \times [0, 1]$, $[t, t+ t_{m+k}] \times \{0\}$, and $\Big\{ \big(s, x_{m+k}(s,  t, 1) \big); \ s \in [t, t+ t_{m+k}] \Big\}$. Then $D_{m+k}$ depends only on the current state as well. This implies
$$
x_{k+1}(t, t+ t_{m+k}, 0), \dots, x_{k+m-1}(t, t+ t_{m+k}, 0) \mbox{ are well-defined by the current state $w(t, \cdot)$}.
$$
As a consequence, the feedback
\begin{multline}\label{bdry-1-NL}
w_{m+ k}(t, 1) =  \zeta_{m+k}(t)  \\[6pt]+ ( 1 - \eta_{m+k}(t)) M_{k} \Big(w_{k+ 1}\big(t, x_{k+1} (t, t+ t_{m+k},  0) \big), \dots, w_{k+ m-1}\big(t, x_{k+ m - 1} (t, t+  t_{m + k }, 0) \big)\Big)
\end{multline}
is well-defined by the current state $w(t, \cdot)$.

We then consider the system \eqref{Sys}, \eqref{bdry-w-0}, and the feedback \eqref{bdry-1-NL}. Let $t_{m+k-1}$ be such that
$$
x_{m+k-1}(t+ t_{m+k-1}, t, 1) = 0.
$$
It is clear that $t_{m+k-1}$ depends only on the current state $w(t, \cdot)$ and the feedback law \eqref{bdry-1-NL}. Let $D_{m+k-1} = D_{m+k-1}(t) \subset \mR^2$ be the open set whose boundary is $\{ t \} \times [0, 1]$, $[t, t+ t_{m+k-1}] \times \{0\}$, and $\Big\{ \big(s, x_{m+k-1}(s, t, 1) \big); \ s \in [t, t+  t_{m+k-1}] \Big\}$. Then $D_{m+k-1}$  depends only on the current state. This implies
$$
x_{k+1}(t, t+ t_{m+k-1}, 0), \dots, x_{k+m-2}(t, t+  t_{m+k-1}, 0) \mbox{ are well-defined by the current state $w(t, \cdot)$}.
$$
As a consequence, the feedback
\begin{multline}\label{bdry-2-NL}
w_{m+ k - 1}(t, 1) = \zeta_{m+k-1}(t)  \\[6pt]
+  ( 1- \eta_{m+k-1}(t)) M_{k-1} \Big(w_{k+ 1}\big(t, x_{k+1} (t, t+ t_{m + k - 1}, 0) \big), \dots, w_{k+ m-2}\big(t, x_{k+ m - 2} (t, t+  t_{m + k -1}, 0) \big) \Big)
\end{multline}
is well-defined by the current state $w(t, \cdot)$.

We continue this process and reach the system \eqref{Sys}, \eqref{bdry-w-0}, \eqref{bdry-1-NL}, \dots
\begin{multline}\label{bdry-m-1-NL}
w_{m+ 2}(t, 1) = \zeta_{m+2}(t)  \\[6pt]
+  (1 -  \eta_{m+2}(t)) M_{2} \Big(w_{k+ 1}\big(t, x_{k+1} (t, t+  t_{m + 2}, 0) \big), \dots, w_{m+1}\big(t, x_{m+1} (t, t+  t_{m + 2}, 0) \big) \Big).
\end{multline}
Let $t_{m+1}$ be such that
$$
x_{m+1}(t+ t_{m+1}, t, 1) = 0.
$$
It is clear that $t_{m+1}$ depends only on the current state $w(t, \cdot)$ and the feedback law \eqref{bdry-1-NL}, \dots, \eqref{bdry-m-1-NL}. Let $D_{m+1} = D_{m+1}(t) \subset \mR^2$ be the open set whose boundary is $\{ t \} \times [0, 1]$, $[t, t+ t_{m+1}] \times \{0\}$, and $\Big\{ \big(s, x_{m+1}(s, t, 1) \big); \ s \in [t, t + t_{m+1}] \Big\}$. Then $D_{m+1}$ depends only on the current state. This implies
$$
x_{k+1}(t, t+ t_{m+1}, 0), \dots, x_{m}(t, t+ t_{m+1}, 0) \mbox{ are well-defined by the current state $w(t, \cdot)$}.
$$
As a consequence, the feedback
\begin{multline}\label{bdry-m-NL}
w_{m+ 1}(t, 1) =  \zeta_{m+1}(t) \\[6pt]
+ (1 -  \eta_{m+1}(t))  M_{1} \Big(w_{k+ 1} \big(t, x_{k+1} (t, t + t_{m + 1}, 0) \big), \dots, w_{m}\big(t, x_{m} (t, t+  t_{m +1}, 0) \big)\Big)
\end{multline}
is well-defined by the current state $w(t, \cdot)$.

To complete the feedback for the system, we consider, for $k+1 \le j \le m$,
\begin{equation}\label{bdry-k-m}
w_{j}(t, 1) = \zeta_j(t),
\end{equation}

We next consider the case $k > m$. The feedback law is then given as follows
\begin{multline*}
w_{m+ k}(t, 1) =  \zeta_{m+k}(t)  \\[6pt]+  (1 - \eta_{m+k}(t)) M_{k} \Big(w_{k+ 1}\big(t, x_{k+1} (t, t+ t_{m+k},  0) \big), \dots, w_{k+ m-1}\big(t, x_{k+ m - 1} (t, t+  t_{m + k }, 0) \big)\Big),
\end{multline*}
\dots
\begin{equation*}
w_{k+2}(t, 1) = \zeta_{k+2}(t)  \\[6pt]
+  ( 1-\eta_{k+2}(t))  M_{k+2-m} \Big(w_{k+ 1}\big(t, x_{k+1} (t, t+ t_{k+2}, 0) \big) \Big),
\end{equation*}
and
$$
w_{k+1}(t, 1) = \zeta_{k+1}(t) + (1 - \eta_{k+1}(t)) M_{k+1 - m},
$$
with the convention $M_{k+1-m} = 0$.

\begin{remark} \rm
The feedbacks above are {\it time-independent} and the well-posedness of the control system  is established in \cite[Lemma 2.2]{CoronNg20} for small initial data.
\end{remark}

To introduce the Lyapunov function,
as in the linear setting, for $k+1 \le i < j  \le k+m$, and for $x \in [0, 1]$,  $t \ge \delta/2$, let $\tau(j, t, x)$
be such that
\begin{equation*}
x_{j} \big(\tau(j, t, x), t, x \big) = 0,
\end{equation*}
and define
\begin{equation*}
a _{i, j} (t, x) = a _{i, j} \big(x, w(t, \cdot) \big)= x_{i} \big(t, \tau(j, t, x), 0 \big).
\end{equation*}
In the last identities, by convention, we considered $x_{i} \big(t, \tau(j, t, x), 0 \big)$ as  a function of $t$ and $x$ denoted by $a _{i, j} (t, x)$ or  a function of $x$ and $w(t, \cdot)$ denoted by $a _{i, j} \big(x, w(t, \cdot) \big)$.

Set
$$
\cH = \Big\{v \in [C^1([0, 1]) ]^n; \mbox{ $v$ satisfies the compatibility conditions at 0 and 1} \Big\}.
$$
Let $q \ge 1$ and let  $\cV: \cH  \to \mR$ $(q \ge 1)$ be defined by
\begin{equation}
\cV (v) =  \hcV (v) + \tcV (v).
\end{equation}
Here, with $\ell = \max\{m, k\}$,
\begin{multline}\label{def-tV-NL}
\hcV(v) =  \sum_{i=1}^{\ell}  \int_0^1  p_i(x)|v_i(x)|^q \, dx \\[6pt]
+ \mathop{\sum_{i}}_{\ell + 1 \le m + i \le k+m}  \int_0^1 p_{m +i}(x)  \Big| v_{m+ i}(x) - M_{i} \Big(v_{k+ 1} \big(a^v_{k+1, m+i}(x, v) \big), \dots, v_{m+ i -1}\big( a^v_{m +i - 1, m+i}(x, v) \big)\Big) \Big|^q  \, dx,
\end{multline}
and
\begin{multline}\label{def-hV-NL}
\tcV(v) =  \sum_{i=1}^{\ell}  \int_0^1  p_i(x)|\partial_t v(0, x)|^q \, dx
+ \mathop{\sum_{i}}_{\ell + 1 \le m + i \le k+m}  \int_0^1 p_{m +i}(x)  \Big|  \partial_t v_{m+i} (0, x) \\[6pt] - \partial_t \Big(M_{i} \Big(v_{k+ 1} \big(t, a_{k+1, m+i}^v(t, x) \big), \dots, v_{m+ i -1}\big(t,  a_{m +i - 1, m+i}^v(t, x) \big)\Big)  \Big)_{t =0}\Big|^q  \, dx.
\end{multline}
Here  $v(t, \cdot)$ is the corresponding solution with $v(t=0, \cdot) = v$ and $a_{k+j, m+i}^v$ is defined as $a_{k+j, m+i}$ with $w(t, \cdot)$ replaced by $v(t, \cdot)$. We also define here
\begin{equation}\label{thm3-pN1}
p_i(x) = \lambda_i^{-1}(x, 0) e^{ - q \Lambda \int_0^x \lambda_i^{-1}(s, 0) \, ds + q \Lambda \int_0^1 \lambda_i^{-1}(s, 0) \, ds} \quad \mbox{ for } 1 \le i \le k,
\end{equation}
\begin{equation}\label{thm3-pN2}
p_i(x) = \Gamma^q \lambda_i^{-1}(x, 0) e^{q \Lambda \int_0^x \lambda_i^{-1}(s, 0) \, ds} \quad \mbox{ for } k + 1 \le i \le \ell,
\end{equation}
\begin{equation}\label{thm3-pN3}
p_{m +i} (x) = \Gamma^q \lambda_{m+i}^{-1} (x, 0) e^{q \Lambda \int_0^x \lambda_{m+i}^{-1}(s, 0) \, ds + q \Lambda \int_0^1 \lambda_{i}^{-1}(s, 0) \, ds} \quad \mbox{ for } \ell + 1 \le m + i \le m + k,
\end{equation}
for some large positive constant $\Gamma \ge 1$ depending only on $\Sigma$ and $B$ (it is independent of $\Lambda$ and $q$).

\medskip

Concerning the feedback given above, we have

\begin{theorem}\label{thm2} Let $m, \, k \ge 1$.  There exists a constant $C \ge 1$, depending only on $\B$ and $\Sigma$ such that for $\Lambda \ge 1$ and for
$T > T_{opt}$, there exist $\eps > 0$  such that if the compatibility conditions $($at $x =0$$)$  \eqref{compatibility-0} and \eqref{compatibility-1} hold for $w(0, \cdot)$, and  $
\| w(0, \cdot) \|_{C^1([0, 1])} < \eps$, we have, for $t \ge \delta/2$ with $\delta = T - T_{opt}$,
\begin{equation}\label{thm2-cl1}
\| w(t, \cdot) \|_{W^{1, q} (0, 1)} \\[6pt]
\le C e^{\Lambda \big( T_{opt} - t\big)} \Big(  \| w(0, \cdot) \|_{W^{1, q}(0, 1)}  + \| \zeta\|_{C^1} + \| \eta\|_{C^1} \| w(0, \cdot) \|_{W^{1, q}(0, 1)}  \Big).
\end{equation}
As a consequence, we have
\begin{multline}\label{thm2-cl2}
\| w(t, \cdot) \|_{C^1 ([0, 1])} \\[6pt]
\le C e^{\Lambda \big( T_{opt} - t\big)} \Big(  \| w(0, \cdot) \|_{C^{1}([0, 1])}  + \| \zeta\|_{C^1} + \| \eta\|_{C^1} \| w(0, \cdot) \|_{C^{1}([0, 1])}  \Big).
\end{multline}

\end{theorem}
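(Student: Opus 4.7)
The plan is to mirror the linear proof of \Cref{thm1}, with the Lyapunov function $\cV = \hcV + \tcV$ controlling simultaneously the state and its time derivative. Under the quasilinear equation $\partial_t w = \Sigma(x,w)\partial_x w$, the quantity $\|\partial_t w(t,\cdot)\|_{L^q}$ is comparable to $\|\partial_x w(t,\cdot)\|_{L^q}$ provided $\|w\|_{C^0}$ is small (since $\Sigma(x,w)$ is then a small perturbation of $\Sigma(x,0)$ and the $\lambda_i$'s are bounded away from zero). Thus an analogue of \Cref{lemA1} would show that $\cV^{1/q}$ is equivalent to $\|v\|_{W^{1,q}}$ on the compatibility class $\cH$ uniformly in $q$, provided the data are small. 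The weights $p_i$ in \eqref{thm3-pN1}--\eqref{thm3-pN3} are tuned exactly as in the linear case so that $(\lambda_i(\cdot,0)p_i)' = \mp q\Lambda p_i$, which will produce the decay rate.

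For $t \ge \delta/2$ the auxiliary variables $\zeta_j$ and $\eta_j$ vanish identically and the boundary feedback at $x=1$ reduces to the pure nonlinear analogue of \eqref{bdry-1}--\eqref{bdry-4}. Following the computation of \Cref{sect-LN1}, I would differentiate $\hcV(w(t,\cdot))$ in $t$: the $\partial_t|w_i|^q$ produces $-\lambda_i(x,w)\partial_x|w_i|^q$ terms which integration by parts converts into $\lambda_i(\cdot,w)p_i$-weighted bulk and boundary contributions. The key algebraic identities from the linear proof, namely \eqref{thm1-claim1} and the cancellation $S_{m+i}=\lambda_{m+i}\partial_x T_{m+i}$, remain valid in the nonlinear setting up to error terms coming from (a) the difference between $\lambda_i(\cdot,w(t,\cdot))$ and $\lambda_i(\cdot,0)$, (b) derivatives of the nonlinear $M_i$ evaluated away from $0$, and (c) the state-dependence of $a^w_{i,j}(t,x)$. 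All these errors are of order $\|w(t,\cdot)\|_{C^1}$ times the integrands appearing in $\cV$. The boundary term at $x=1$ vanishes by the nonlinear feedback (since $\zeta,\eta$ are zero), while the boundary term at $x=0$ is handled exactly as in the linear proof by the Gaussian elimination argument based on \eqref{cond-B-1}, up to quadratic corrections from $\B - \nabla\B(0)\cdot$ that are again $O(\|w\|_{C^1})$.

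The $\tcV$-piece requires differentiating the system in $t$: $\partial_t w$ satisfies a quasilinear hyperbolic system with the same principal part $\Sigma(x,w)\partial_x$ plus lower-order terms linear in $\partial_t w$ with $C^0$ coefficients depending on $w$ and $\partial_x w$. Moreover the compatibility conditions \eqref{compatibility-0}--\eqref{compatibility-1} propagate in time because the feedback data on $x=1$ are $C^1$ and the boundary relation on $x=0$ is compatible by construction. The same Lyapunov manipulation applied to $\tcV$ therefore yields an analogous $-q\Lambda\tcV$ principal contribution plus $O(\|w\|_{C^1})\cV$ errors from the aforementioned lower-order terms. Putting everything together, for $t\ge\delta/2$ one obtains
\begin{equation*}
\frac{d}{dt}\cV(w(t,\cdot)) \le -q\Lambda\,\cV(w(t,\cdot)) + Cq\,\|w(t,\cdot)\|_{C^1}\,\cV(w(t,\cdot)).
\end{equation*}
By \Cref{pro1} (and its proof, which controls $\|w\|_{C^1}$ uniformly on $[0,T]$ by $\eps$), the perturbation is absorbed for $\eps$ small enough, so Gronwall yields $\cV(w(t,\cdot)) \le e^{-q\Lambda t}\cV(w(0,\cdot))$; the ratio $\max p_i/\min p_i \le C^q e^{q\Lambda T_{opt}}$ then produces the factor $e^{\Lambda(T_{opt}-t)}$ exactly as at the end of \Cref{sect-LN1}. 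The contribution of the $\zeta,\eta$-layer on $[0,\delta/2]$ is a standard energy estimate for quasilinear hyperbolic systems with bounded-$C^1$ inhomogeneous boundary data, and gives rise to the $\|\zeta\|_{C^1}+\|\eta\|_{C^1}\|w(0,\cdot)\|_{W^{1,q}}$ term in \eqref{thm2-cl1}; letting $q\to+\infty$ yields \eqref{thm2-cl2}. The main obstacle I anticipate is the bookkeeping of the $O(\|w\|_{C^1})$ nonlinear error terms uniformly in $q$ and in $\Lambda$, together with the $q$-uniform equivalence of $\cV^{1/q}$ with the $W^{1,q}$-norm through the nonlinear change-of-characteristic variables $a^w_{i,j}$.
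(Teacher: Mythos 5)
Your proposal follows essentially the same route as the paper's proof: the same Lyapunov functional $\cV = \hcV + \tcV$, the same differentiation and integration-by-parts computation based on the nonlinear characteristic identity for $a_{i,j}$, the same Gaussian-elimination treatment of the boundary term at $x=0$, absorption of the $O(\|w\|_{C^1})$ error terms for $\eps$ small, Gronwall, and the weight-ratio argument, with the $[0,\delta/2]$ layer handled by a crude estimate. The only cosmetic difference is that the transport identity $\partial_t T_{m+i} = \lambda_{m+i}(x,w)\,\partial_x T_{m+i}$ is in fact exact even for nonlinear $M_i$ and state-dependent $a_{i,j}$, so some of the error terms you budget for under (b) and (c) do not actually arise.
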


\begin{proof} We first claim that, for $k+1 \le i < j \le k+m$ and $x \in [0, 1]$,
\begin{equation}\label{claim-1-NL}
\lambda_{i}\Big(a_{i, j}(t, x), w \big( t, a_{i, j}(t, x) \big) \Big) + \partial_t a_{i, j}(t, x) =
\lambda_{j}\big(x,  w(t, x) \big) \partial_x a_{i, j}(t, x).
\end{equation}
Indeed, by the characteristic, we have
\begin{equation*}
a_{i, j} \big(s, x_j(s, t, x) \big)  =   x_i(s, \tau(j, t, x), 0) \mbox{ for } t \le s \le \tau(j, t,  x).
\end{equation*}
Taking the derivative with respect to $s$ yields, for $t \le s \le \tau(j, t,  x)$,
\begin{equation*}
\partial_t a_{i, j} \big(s, x_j(s, t, x) \big) + \partial_s x_j(s, t, x) \partial_x  a_{i, j} \big( s, x_j(s, t, x) \big)  =   \partial_s x_i(s, \tau (j, t, x), 0).  \end{equation*}
Considering $s = t$ and using the definition of the flows, we obtain the claim.

As a consequence of \eqref{claim-1-NL}, we have
\begin{equation}\label{da-NL}
\partial_x \Big( w_{i} \big(t, a_{i, j}(t, x) \big) \Big) =  \frac{ \lambda_{i}\Big(a_{i, j}(t, x), w \big( t, a_{i, j}(t, x) \big) \Big) + \partial_t a_{i, j}(t, x) }{\lambda_{j}\big(x,  w(t, x) \big)} \partial_x w_{i} \big(t, a_{i, j}(t, x) \big).
\end{equation}
Identity \eqref{da-NL} is a variant of \eqref{thm1-du} for the nonlinear setting and plays a role in our analysis.

\medskip
We next only consider the case $m \ge k$. The case $m < k$ can be proved similarly as in the proof of \Cref{thm1}.  
We will assume that the solutions are of class $C^2$. The general case can be established via a density argument as in \cite[page 1475]{2015-Bastin-Coron-SICON} and \cite[Comments 4.6, page 127-128]{2016-Bastin-Coron-book}.

We first deal with  $\hcV$.  We have, for $t \ge \delta/2$,
\begin{multline}\label{dV-NL}
\frac{d}{dt} \hcV(w(t, \cdot))  =  - \sum_{i=1}^k \int_0^1  p_i(x) \lambda_i \big(x, w(t, x) \big)  \partial_x |w_i(t, x)|^q  \, dx \\[6pt]  + \sum_{i=k+1}^m \int_0^1  p_i(x) \lambda_i \big(x,  w(t, x)\big)  \partial_x |w_i(t, x)|^q  \, dx \\[6pt]
+ \sum_{i =1}^{k} \int_0^1 q  p_{m +i}(x)  \partial_t  T_{m+i }(t, x) |T_{m+i }(t, x)|^{q-2} T_{m+i }(t, x) \, dx,
\end{multline}
where
\begin{equation}\label{T-NL}
T_{m+i}(t, x) = w_{m+ i}(t, x) - M_{i} \Big(w_{k+ 1} \big(t, a_{k+1, m+i}(t, x) \big), \dots, w_{m+ i -1}\big(t, a_{m+i - 1, m+i}(t, x) \big)\Big).
\end{equation}
Using \eqref{da-NL} and noting that, for $k+ 1 \le i \le j \le k+m$,
$$
\partial_t w_i(t, a_{i, j} (t, x)) = \lambda_i \big(a_{i, j} (t, x), w(t, a_{i, j} (t, x)) \big)  \partial_x w_i(t, a_{i, j} (t, x)),
$$
one can prove that
\begin{equation}\label{ST-NL}
\partial_t T_{m+i}(t, x) = \lambda_{m+i} (x, w(t, x))\partial_x T_{m+i} (t, x).
\end{equation}

Using \eqref{ST-NL} and making an integration by parts,  as in \eqref{thm1-wW}, we obtain
\begin{equation}\label{thm1-wW-NL}
\frac{d}{dt} \hcV( w(t, \cdot))) = \hcW_1(t) + \hcW_2 (t),
\end{equation}
where
 \begin{multline}\label{thm1-W1-NL}
 \hcW_1(t) = - \sum_{i = 1}^k  \lambda_i (1, w(t, 1)) p_i (1) |w_i(t, 1)|^q   +  \sum_{i = 1}^k  \lambda_i (0, w(t, 0)) p_i (0) | (Bu_+)_i(t, 0)|^q \\[6pt] - \sum_{i=k+1}^m \lambda_i (0, w(t, 0)) p_i(0)  |w_i(t, 0)|^q \\[6pt]
  -   \sum_{i = 1}^{k} \lambda_{m+i} (0, w(t, 0)) p_{m +i} (0) \Big| w_{m+ i}(t, 0) - M_{i} \Big(w_{k+ 1} (t, 0), \dots, w_{m+ i -1} (t,  0)\Big) \Big|^q,
\end{multline}
and
\begin{multline}\label{thm1-W2-NL}
\hcW_2 (t)=  \sum_{i=1}^k \int_0^1  \big(\lambda_i(x, w(t, x)) p_i(x)\big)_x  |w_i(t, x)|^q \, dx -   \sum_{i=k+1}^m \int_0^1 \big( \lambda_{i} (x, w(t, x)) p_{i} (x) \big)_x  |w_{i}(t, x)|^q \, dx   \\[6pt]
 - \sum_{i=1}^k \int_0^1  \big(\lambda_{m+i}(x, w(t, x)) p_{m+i} (x) \big)_x   \Big| w_{m+ i}(t, x)  \, dx  \quad \quad \quad \quad \quad  \quad \quad \quad \quad  \\[6pt]  \quad \quad \quad \quad \quad \quad  \quad \quad \quad \quad \quad \quad - M_{i} \Big(w_{k+ 1} \big(t, a_{k+1, m+i}(t, x) \big), \dots, w_{m+ i -1}\big(t,  a_{m+i - 1, m+i}(t, x) \big)\Big) \Big|^q \, dx.
 \end{multline}

As in the proof of \Cref{thm1}, we also have, for large $\Gamma$ and $| w(t, 0)|$ sufficiently small,
\begin{multline*}
\sum_{i = 1}^{k} \lambda_{m+i} (0, w(t, 0)) p_{m +i} (0) \Big| w_{m+ i}(t, 0) - M_{i} \Big(w_{k+ 1} (t, 0), \dots, w_{m+ i -1} (t,  0)\Big) \Big|^2 \\[6pt]
\ge  \sum_{i = 1}^k  \lambda_i (0, w(t, 0)) p_i (0) | (B w_+)_i(t, 0)|^2.
\end{multline*}
This implies
\begin{equation}\label{thm3-NL-W1}
\hcW_1(t) \le 0.
\end{equation}
Concerning $\hcW_2(t)$, we write
$$
\lambda_i(x, w(t, x)) p_i(x) = \frac{\lambda_i (x, w(t, x))}{\lambda_i(x, 0)} \lambda_i(x, 0) p_i(x).
$$
Note that, since $\Sigma$ and  $\partial_y \Sigma$ are of class $C^1$,
\begin{equation*}
\left| \frac{\lambda_i(x, w(t, x))}{\lambda_i(x, 0)} - 1 \right|  + \left| \partial_x \left( \frac{\lambda_i(x, w(t, x))}{\lambda_i(x, 0)} \right) \right| \le C (\eps, \delta), 
\end{equation*}
a quantity which goes to 0 if $\eps \to 0$ for fixed $\delta$. 

Using \eqref{thm3-pN1} and \eqref{thm3-pN3},
we obtain
\begin{equation}\label{thm3-NL-W2}
\hcW_2 (t) \le  - q \Lambda (1 - C( \eps, \delta)) \hcV(t).
\end{equation}

Combining \eqref{thm1-wW-NL}, \eqref{thm3-NL-W1}, and \eqref{thm3-NL-W2} yields
\begin{equation}\label{thm2-dt-p1}
\frac{d}{dt} \cV(t)  \le - q (\Lambda - C (\eps, \delta)) \cV(t) \mbox{ for } t \ge \delta/2.
\end{equation}

We next investigate $\tcV$. By \eqref{def-hV-NL},  we have, for $t \ge \delta/2$,
\begin{multline}
\tcV(w(t, x)) =  \sum_{i=1}^{k}  \int_0^1  p_i(x)|\partial_t w(t, x) |^q \, dx
+ \mathop{\sum_{i}}_{k + 1 \le m + i \le k+m}  \int_0^1 p_{m +i}(x)  \Big| \partial_t w_{m+i}(t, x) \\[6pt]
-  \Big( M_{i} \Big(w_{k+ 1} \big(t, a_{k+1, m+i}(t, x) \big), \dots, w_{m+ i -1}\big(t, a_{m+i - 1, m+i}(t, x) \big) \Big)_t
\Big|^q  \, dx.
\end{multline}
Using \eqref{ST-NL}, we have
\begin{align*}
\frac{d}{dt} \tcV(w(t, \cdot))
= &  - \sum_{i=1}^k \int_0^1  p_i(x) \lambda_i \big(x, w(t, x) \big)  \partial_x |\partial_t w_i(t, x)|^q \, dx \\[6pt]
&+   \sum_{i=1}^k \int_0^1 \frac{q  p_i(x)}{\lambda_i(x, w(t,x))} \partial_y \lambda_i (x, w(t, x)) \partial_t w (t, x) |\partial_t w_i (t, x) |^q \, dx   \\[6pt]  &+ \sum_{i=k+1}^m \int_0^1  p_i(x) \lambda_i \big(x,  w(t, x)\big)  \partial_x |\partial_t w_i(t, x)|^q  \, dx  \\[6pt]
& + \sum_{i=k+1}^m \int_0^1 \frac{q p_i(x)}{\lambda_{i}(x, w(t, x))} \partial_y \lambda_i \big(x,  w(t, x)\big)  \partial_t w (t, x) |\partial_t w_i(t, x)|^q  \, dx  \\[6pt]
& + \sum_{i =1}^{k} \int_0^1  p_{m +i}(x)  \lambda_{m+i}(x, w(t, x)) \partial_x (|\partial_t T_{m+i }(t, x)|^{q} ) \, dx \\[6pt]
& + \sum_{i =1}^{k} \int_0^1 \frac{q  p_{m +i}(x)}{\lambda_{m+i}(x, w(t, x))}  \partial_y \lambda_{m+i} (x, w(t, x)) \partial_t w(t, x)  |\partial_t T_{m+i }(t, x)|^{q}  \, dx.
\end{align*}
Set
\begin{multline}\label{thm1-tW3-NL}
\tcW_3(t) = \sum_{i=1}^k \int_0^1\frac{ q  p_i(x)}{\lambda_i(x, w(t, x))} \partial_y \lambda_i (x, w(t, x)) \partial_t w (t, x) |\partial_t w (t, x) |^q \, dx  \\[6pt]
+ \sum_{i=k+1}^m \int_0^1 \frac{q p_i(x)}{\lambda_i(x, w(t, x))} \partial_y \lambda_i \big(x,  w(t, x)\big)  \partial_t w(t, x) |\partial_t w(t, x)|^q  \, dx  \\[6pt]
+ \sum_{i =1}^{k} \int_0^1  \frac{q  p_{m +i}(x) }{\lambda_{m+i} (x, w(t, x))}\partial_y \lambda_{m+i} (x, w(t, x)) \partial_t w(t, x)  |\partial_t T_{m+i }(t, x)|^{q} \, dx.
\end{multline}
An integration by parts yields
\begin{equation}\label{thm1-twW-NL}
\frac{d}{dt} \tcV( w(t, \cdot))) = \tcW_1(t) + \tcW_2 (t) + \tcW_3(t),
\end{equation}
where
 \begin{multline}\label{thm1-tW1-NL}
 \tcW_1(t) = - \sum_{i = 1}^k  \lambda_i (1, w(t, 1)) p_i (1) |\partial_t w_i(t, 1)|^q   +  \sum_{i = 1}^k  \lambda_i (0, w(t, 0)) p_i (0) | \partial_t (Bu_+)_i(t, 0)|^q \\[6pt] - \sum_{i=k+1}^m \lambda_i (0, w(t, 0)) p_i(0)  |\partial_t w_i(t, 0)|^q \\[6pt]
  -   \sum_{i = 1}^{k} \lambda_{m+i} (0, w(t, 0)) p_{m +i} (0) \Big| \partial_t w_{m+ i}(t, 0) - \Big( M_{i} \Big(w_{k+ 1} (t, 0), \dots, w_{m+ i -1} (t,  0)\Big)  \Big)_t\Big|^q,
\end{multline}
and
\begin{multline}\label{thm1-tW2-NL}
\tcW_2 (t)=  \sum_{i=1}^k \int_0^1  \big(\lambda_i(x, w(t, x)) p_i(x)\big)_x  |\partial_t w_i(t, x)|^q \, dx \\[6pt] -   \sum_{i=k+1}^m \int_0^1 \big( \lambda_{i} (x, w(t, x)) p_{i} (x) \big)_x  |\partial_t w_{i}(t, x)|^q \, dx   \\[6pt]
 - \sum_{i=1}^k \int_0^1  \big(\lambda_{m+i}(x, w(t, x)) p_{m+i} (x) \big)_x   \Big| \partial_t w_{m+ i}(t, x)  \quad \quad \quad \quad \quad  \quad \quad \quad \quad  \\[6pt]  \quad \quad \quad \quad \quad \quad  \quad \quad \quad \quad \quad \quad - \Big( M_{i} \Big(w_{k+ 1} \big(t, a_{k+1, m+i}(t, x) \big), \dots, w_{m+ i -1}\big(t,  a_{m+i - 1, m+i}(t, x) \big)\Big)  \Big)_t \Big|^q \, dx.
\end{multline}
As before, we have
\begin{equation}\label{thm1-twW12}
\tcW_1(t) + \tcW_2 (t) \le - q \Lambda (1 - C (\eps, \delta)) \tcV.
\end{equation}
One can check that
\begin{equation}\label{thm1-twW3}
\tcW_3 \le C (\eps, \delta) q \tcV.
\end{equation}
From \eqref{thm1-twW-NL}, \eqref{thm1-twW12}, and \eqref{thm1-twW3}, we derive that
\begin{equation}\label{thm2-dt-p2}
\frac{d}{dt} \tcV (t) \le - q \Lambda (1 - C(\eps, \delta)) \tcV.
\end{equation}

Combining \eqref{thm2-dt-p1} and \eqref{thm2-dt-p2} yields
\begin{equation*}
\frac{d}{dt} \cV (t) \le - q \Lambda (1 - C (\eps, \delta)) \cV.
\end{equation*}

The conclusion now follows as in the linear case after taking $\eps$ sufficiently small, replacing $\Lambda (1 - C \eps) $ by $\Lambda$, and noting that
$$
\| w(t, \cdot) \|_{C^1([0, 1])}  \le C  \Big(  \| w(0, \cdot) \|_{C^1([0, 1])} + \|\zeta \|_{C^1} + \|\eta \|_{C^1} \| w(0, \cdot) \|_{C^1([0, 1])} \Big)  \mbox{ for } 0 \le t \le \delta/2.
$$
We also note here that the conclusion \eqref{A-equiv} of \Cref{lemA1} also holds for nonlinear maps $M_i$ of class $C^1$ with $M_i(0)  = 0$ provided that $\|v \|_{C^1([0, 1)}$ is sufficiently small. The details are omitted.
\end{proof}

\appendix

\section{A useful lemma} \label{A}
\renewcommand{\theequation}{A\arabic{equation}}
\renewcommand{\thelemma}{A\arabic{lemma}}
  \setcounter{equation}{0}  
  \setcounter{lemma}{0}  

\begin{lemma} \label{lemA1} Let $m, \,  k  \ge 1$.
For $k + 1 \le i < j \le k+ m$, let $b_{i, j}: [0, 1] \to [0, 1]$ be of class $C^1$ such that
\begin{equation}\label{A-b}
c_1 \le  |b_{i, j}' (x)| \le c_2 \mbox{ for } x \in (0, 1),
\end{equation}
for some positive constants $c_1$ and $c_2$.
Set $\ell = \max \{k, m \}$. Let, for $\ell + 1 \le m + i \le m+k$,  $M_i \in \mR^{1 \times (m+1 - k - i)}$.  Define, for $v \in [L^q(0, 1)]^n$,
\begin{multline}\label{norm-AV}
\vertiii{v}^q =  \sum_{i=1}^{\ell} \int_0^1  |v_i(x)|^q \, dx \\[6pt]
+ \mathop{\sum_{i}}_{\ell + 1 \le m + i \le k+m} \int_0^1    \Big| v_{m+ i}(x) - M_{i} \Big(v_{k+ 1} \big(b_{k+1, m+i}(x) \big), \dots, v_{m + i -1}\big( b_{m +i - 1,  m +i}(x) \big)\Big) \Big|^q  \, dx.
\end{multline}
We have
\begin{equation}\label{A-equiv}
\lambda^{-1} \| v\|_{L^q(0, 1)} \le \vertiii{v}  \le \lambda \| v\|_{L^q(0, 1)},
\end{equation}
for some $\lambda \ge 1$ depending only on $k$, $m$, $c_1$, and $c_2$, and $M_i$; it is independent of $q$.
\end{lemma}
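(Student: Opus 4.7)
The upper bound is immediate. For each integrand of the form $|v_{m+i}(x) - M_i(\ldots)|^q$ in $\vertiii{v}^q$, apply the triangle inequality together with the elementary estimate $(a+b)^q \le 2^q(a^q + b^q)$, and bound $|M_i(\xi_{1},\dots,\xi_{r})| \le C_{M}(|\xi_1|+\cdots+|\xi_r|)$ using the operator norm of the row vector $M_i$. Then perform the change of variables $y = b_{k+j,m+i}(x)$ in each of the resulting integrals $\int_0^1 |v_{k+j}(b_{k+j,m+i}(x))|^q\,dx$. Since $|b_{i,j}'|\ge c_1>0$, the map $b_{i,j}$ is strictly monotone hence injective onto its image in $[0,1]$, and the Jacobian in the change of variables is bounded by $c_1^{-1}$. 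This yields $\vertiii{v}^q \le \lambda^q \|v\|_{L^q(0,1)}^q$ for a constant $\lambda=\lambda(k,m,c_1,c_2,M_i)$ that is independent of $q$, because the only $q$-dependence enters through factors of the form $C^q$ whose $q$-th root is $C$.

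For the lower bound, note first that the terms $\sum_{i=1}^{\ell}\int_0^1 |v_i(x)|^q\,dx$ already give $\|v_i\|_{L^q(0,1)}^q \le \vertiii{v}^q$ for $1 \le i \le \ell$. It remains to control $\|v_{m+i}\|_{L^q(0,1)}$ for indices $\ell+1 \le m+i \le m+k$. Setting $T_{m+i}(x)=v_{m+i}(x)-M_i\big(v_{k+1}(b_{k+1,m+i}(x)),\dots,v_{m+i-1}(b_{m+i-1,m+i}(x))\big)$, the reverse triangle inequality combined with $(a+b)^q \le 2^q(a^q+b^q)$ and the same operator-norm estimate on $M_i$ gives
\begin{equation*}
|v_{m+i}(x)|^q \le 2^q |T_{m+i}(x)|^q + (2C_M)^q r^{q-1} \sum_{j}|v_{k+j}(b_{k+j,m+i}(x))|^q.
\end{equation*}
Integrating and applying the same change of variables as before, each term on the right involves $\|v_{k+j}\|_{L^q}^q$ with $k+j < m+i$.

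The conclusion then follows by finite induction on $m+i$ starting from $m+i=\ell+1$ up to $m+i=m+k$: at every step the indices $k+j<m+i$ appearing on the right-hand side are either in the base batch $\{1,\dots,\ell\}$ (already directly controlled by $\vertiii{v}$) or have been controlled in a previous induction step. Since each step multiplies the accumulated constant by a factor of the form $C(k,m,c_1,c_2,M_i)^q$, taking $q$-th roots at the very end produces a constant $\lambda$ independent of $q$, as required.

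The only subtle point is bookkeeping in the case $m<k$, where the effective range of indices is $k-m+1 \le i \le k$ with $M_i$ acting on a vector of size $m+1-k-i$; when this size is $\le 0$ one interprets $M_i$ as the zero map on the empty vector, so the estimate above remains valid (with the sum on the right being empty and no induction hypothesis needed for that step). Keeping the ordering of the induction consistent with the strict inequality $k+j<m+i$ is the main thing to verify, but since each $T_{m+i}$ involves only strictly smaller indices among the ``upper'' block, this is automatic.
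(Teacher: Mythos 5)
Your proposal is correct and follows essentially the same route as the paper: the upper bound via the triangle inequality and a change of variables using $|b_{i,j}'|\ge c_1$, and the lower bound via the reverse triangle inequality together with the same change of variables, iterated over the indices $m+i$ so that each $T_{m+i}$ only involves components already controlled. Your explicit finite induction merely spells out what the paper's combination of its displayed inequalities leaves implicit, so the two arguments coincide in substance.
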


\begin{proof} We only consider the case $m \ge k$. The other case can be proved similarly. It is clear that
\begin{equation}\label{coucoucou}
\vertiii{v} \le C \| v\|_{L^q(0, 1)}.
\end{equation}
On the other hand, using the inequality, for $\xi_1, \xi_2 \in \mR^d$ with $d \ge 1$,
$$
|\xi_1|^q + |\xi_2 - \xi_1|^q \ge C^{-q} (|\xi_1|^q + |\xi_2|^q),
$$
we have, for $1 \le i \le k$,
\begin{multline}\label{lemA1-p1}
 \int_0^1  \Big| v_{m+ i}(x) - M_{i} \Big(v_{k+ 1} \big(b_{k+1, m+i}(x) \big), \dots, v_{m+ i -1}\big( b_{m+i - 1, m+i}(x) \big)\Big) \Big|^q  \, dx \\[6pt]
 +  \sum_{k+1 \le j \le  m + i - 1}  \int_0^1   |v_{i} \big(b_{j, m+i} (x) \big)|^q \, dx \ge C^{-q} \int_0^1   |v_{m+i}(x)|^q \, dx.
\end{multline}
Using \eqref{A-b}, by a change of variables, we obtain, for $k+1 \le i < j \le m + k$,
\begin{equation}\label{lemA1-p2}
   \int_0^1 |v_{i} \big(b_{i, j} (x) \big)|^q \, dx \le C \int_0^1  |v_{i}(x)|^q \, dx.
\end{equation}
From \eqref{lemA1-p1} and \eqref{lemA1-p2}, we deduce that
\begin{multline}\label{lemA1-p3}
\sum_{i =1}^{k} \int_0^1   \Big| v_{m+ i}(x) - M_{i} \Big(v_{k+ 1} \big(b_{k+1, m+i}(x) \big), \dots, v_{m+ i -1}\big( b_{m+i - 1, m+i}(x) \big)\Big) \Big|^q  \, dx \\[6pt]
+ \sum_{i=k+1}^m  \int_0^1 |v_i(x)|^q \,d x  \ge C^{-q} \int_0^1 \sum_{i=k+1}^n |v_i(x)|^q \,d x.
\end{multline}
The conclusion then follows from \eqref{coucoucou} and  \eqref{lemA1-p3}.
\end{proof}

\medskip
\noindent \textbf{Acknowledgments.} The authors were partially supported by  ANR Finite4SoS ANR-15-CE23-0007. H.-M. Nguyen thanks Fondation des Sciences Math\'ematiques de Paris (FSMP) for the Chaire d'excellence which allows him to visit  Laboratoire Jacques Louis Lions  and Mines ParisTech. This work has been done during this visit.

\providecommand{\bysame}{\leavevmode\hbox to3em{\hrulefill}\thinspace}
\providecommand{\MR}{\relax\ifhmode\unskip\space\fi MR }
\providecommand{\MRhref}[2]{%
  \href{http://www.ams.org/mathscinet-getitem?mr=#1}{#2}
}
\providecommand{\href}[2]{#2}

\end{document}